\newtheorem{theorem}{Theorem}[section]
\newtheorem{proposition}[theorem]{Proposition}
\newtheorem{definition}[theorem]{Definition}
\newtheorem{corollary}[theorem]{Corollary}
\newtheorem{example}[theorem]{Example}
\newtheorem{lemma}[theorem]{Lemma}
\newtheorem{remark}[theorem]{Remark}
\def\Inf{\mathrm{Inf}\hspace{0.1cm}}
\title{Prime fuzzy ideals over noncommutative rings}
\author{Gabriel Navarro}
\address{Department of Computer Sciences and AI\\
 University of Granada \\
C/ El Greco s/n\\ E-51002 \\ Ceuta\\ Spain}
\email{gnavarro@ugr.es}
\author{Oscar Cortadellas}
\address{Department of Algebra\\
Faculty of Sciences\\
University of Granada \\
Avda. Fuentenueva s/n\\ E-18071\\ Granada\\ Spain}
\email{ocortad@ugr.es}
\author{F. J. Lobillo}
\address{Department of Algebra\\
ETSIIT \\
University of Granada \\
c/ Periodista Daniel Saucedo Aranda s/n\\ E-18071\\ Granada\\ Spain}
\email{jlobillo@ugr.es}
\thanks{This research has been supported
by the projects MTM2010-20940-C02-01 and FQM-266, FQM-1889 and TIC-111 (Junta de Andaluc\'{i}a
Research Groups).}
\begin{document}

\maketitle

\begin{abstract}
In this paper we introduce prime fuzzy ideals over a noncommutative ring. This notion of primeness is equivalent to level cuts being crisp prime ideals. It also generalizes the one provided by Kumbhojkar and Bapat in \cite{KumbhojkarBapat_1990}, which lacks this equivalence in a noncommutative setting. Semiprime fuzzy ideals over a noncommutative ring are also defined and characterized as intersection of primes. This allows us to introduce the fuzzy prime radical and contribute to establish the basis of a Fuzzy Noncommutative Ring Theory.
\end{abstract}

\section{Introduction}

Since the well-known paper of Rosenfeld \cite{Rosenfeld_1971} dealing with fuzzy sets of a group, many researchers have centered on giving an algebraic structure to the universe space, defining the classic algebraic topics on a fuzzy environment and studying their properties. For instance, the reader may consult the papers  \cite{Kuroki_1982} or \cite{Kuroki_1991}
about fuzzy semigroups; \cite{KumbhojkarBapat_1990}, \cite{Liu_1982}, \cite{MalikMordeson_1990}, \cite{SwamySwamy_1988} or \cite{Zhang_1988} about fuzzy ideals and fuzzy rings; \cite{LP1} or \cite{Pan_1987} about fuzzy modules; \cite{MalikMordeson_1990b} about fuzyy vector spaces; \cite{Chen_2009} about fuzzy coalgebras over a field;  \cite{Yehia_2001} about Lie algebras, and so on.
Their common methodology consists of giving some rules which link the algebraic operations with the order of the lattice where the imprecision or uncertainty is measured. On average, as the pioneer work of Zadeh \cite{Zadeh_1965} does, the unit real interval is the chosen lattice, although it is a common generalization operating over an arbitrary completely distributive lattice. Once this is done, two ways may be followed: on the one hand, the crisp algebraic structures are studied under the perspective of these new fuzzy objects whilst, on the other hand, the fuzzy objects are considered as new algebraic structures which deserve to be studied.

Focusing on the structure of ring, the early paper of Liu \cite{Liu_1982}, defining fuzzy ideals, initiated the investigation of rings by means of expanding the class of ideals with these fuzzy objects. Some years later, during the final eighties and nineties, many papers of different authors were published in order to develop a Fuzzy Ring Theory. Nevertheless, most of these authors restrict their attention to commutative rings or, simply, omit to mention that this requirement is necessary in certain cases, see Section \ref{survey}. This fact becomes surprising when we realize that noncommutative rings may be found in a wide range of knowledge areas, in which the fuzzy techniques could be applied.

For instance, the algebraic theory of error-correcting codes originally took place in the setting of vector spaces over finite fields. However, the interest in algebraic codes over finite rings grew after the realization that certain non-linear codes are actually equivalent to linear codes over the ring of integer modulo four, see \cite{Hammons_1994} or \cite{Nechaev_1991}. Recently, this results have been generalized to codes over modules over arbitrary rings \cite{greferath_2004}\cite{greferath_2006b} with emphasis in the context of codes over finite Frobenius rings. Basically they propose that ring-theoretic coding theory should use a module as alphabet, rather the ring itself.

Another example may be found in the known Representation Theory of Algebras, see for instance \cite{Simson_2006}. A directed graph $Q$ (a  \emph{quiver} in the terminology of this field) is a quadruple $(Q_0,Q_1,s,e)$ where $Q_0$ is the set of vertices, $Q_1$ is the set of arrows and for each arrow $\alpha\in
Q_1$, the vertices $s(\alpha)$ and $e(\alpha)$ are the source and the sink of $\alpha$. Hence, given a commutative field $k$, the path algebra $kQ$ of the directed graph $Q$ is the $k$-vector space generated by all possible oriented paths in $Q$. Given two paths $\alpha_1\cdots \alpha_n$ and $\beta_1\cdots \beta_m$, their composition is $(\alpha_1\cdots \alpha_n\beta_1\cdots \beta_m)$ whether $s(\beta_1)=e(\alpha_n)$, and zero otherwise. This is a noncommutative ring whose category of modules is closely related with the shape of the graph. Actually, the category of modules is equivalent to the representations of the graph.

Attending to the literature on fuzzy sets, the differential equations studied in \cite{BuckleyFeuring_2000} can be
view as fuzzy subsets of the set of differential equations. Let
$A_1(\mathbb{R})$ be the first Weyl algebra. This is identified with the
set of differential equations with polynomial coefficients (see \cite[\S
1.3]{MCRobson}). Weyl algebras are noncommutative and they are the main
tool in the algebraic treatment of (partial) differential equations.
Solutions of differential equations can be analyzed algebraically via
representations of Weyl algebras. Hence a deep knowledge of ideals in
noncommutative rings allows the study of fuzzy (partial) differential
equations with polinomial coefficients.

Finally, we would like to mention the known connection between representations of Lie algebras and representations of enveloping algebras of Lie algebras. The best
source for these results is the monograph of Dixmier
\cite{Dixmier_1996}. In particular, left modules over enveloping algebras
are the same that modules over Lie algebras. The fuzzy version of Lie
algebras can be seen in \cite{Yehia_2001}, and fuzzy modules over
noncommutative rings are studied in \cite{LP1}. Properties of fuzzy
representations and fuzzy modules over Lie algebras can be derived from
the theory of noncommutative fuzzy rings.

This paper has a double aim. On the one hand, we study the notion of primeness on fuzzy ideals clarifying relationships between various definitions appearing in the literature. As far as these authors know, due to its importance on the ring structure, primeness is the first notion under consideration in a fuzzy setting, see \cite{MukherjeeSen_1987}. Since then, some researchers have tried to redefine fuzzy primeness in order to get certain properties that a correct ``fuzzification'' of prime ideal should verify, see \cite{Kumar_1992}, \cite{KumbhojkarBapat_1993}, \cite{SwamySwamy_1988}, \cite{Zahedi_1991} or \cite{Zhang_1988}. Albeit some authors have worried about stablishing relationships between these concepts, see for instance \cite{KumbhojkarBapat_1993}, as we mentioned above, they ommit the necessity of working over a commutative ring producing certain misunderstandings. Hence, in Section \ref{survey}, we add a brief historical survey and show examples which illustrate the problem and correct some mistakes contained in previous papers.

On the other hand, it is generally accepted that the concept of fuzzy primeness considered in \cite{KumbhojkarBapat_1990} is the most appropriated. This is so, mainly, because this notion is compatible with the level cuts, i.e., it is equivalent to the level cuts being usual prime ideals. This is also congruent with the ideas of \cite{Head_1995} about deriving results concerning fuzzy structures from analogous results that concern crisp structures. Nevertheless, in a general setting, in parallel to the problems between crisp primeness in commutative and noncommutative rings, fuzzy primeness as given in \cite{KumbhojkarBapat_1990} is no longer compatible with the level cuts, see Example \ref{exD2notD4}. Hence, in Section \ref{fuzzyprime}, we propose a definition satisfying the aforementioned property, see Definition \ref{defprime}. This definition has another virtue: fuzzy semiprime ideals can be recovered as intersection of fuzzy prime ideals, see Theorem \ref{inter}. This gives the possibility of setting the fuzzy prime radical of a fuzzy ideal as the intersection of all fuzzy prime ideals containing it, see Corollary \ref{frad}. After that, hopefully we contribute to the basis of a Fuzzy Noncommutative Ring Theory which may be developed in future works.

\section{Preliminares}

All along the paper, by a fuzzy set we mean the classical concept defined in \cite{Zadeh_1965}, that is, a fuzzy set over a base set $X$ is simply a set map $\mu:X\rightarrow [0,1]$. The intersection and union of fuzzy sets is given by the point-by-point infimum and supremum. We shall use the symbols $\wedge$ and $\vee$ for denoting the infimum and supremum of a collection of real numbers.

We recall from \cite{Liu_1982} that by a left (resp. right) fuzzy ideal of an arbitrary ring $R$ we mean a fuzzy set $I:R\rightarrow [0,1]$ satisfying the following properties:
\begin{enumerate}[i)]
\item $I(x-y)\geq I(x)\wedge I(y)$ for any $x,y\in R$, i.e., it is an additive fuzzy subgroup.
\item $I(xy)\geq I(y)$ (resp. $I(xy)\geq I(x)$) for any $x,y\in R$.
\end{enumerate}
A (two-sided) fuzzy ideal $I$ is a fuzzy set which is a left and right fuzzy ideal, i.e., it satisfies the following conditions:
\begin{enumerate}[i)]
\item $I(x-y)\geq I(x)\wedge I(y)$ for any $x,y\in R$.
\item $I(xy)\geq I(x)\vee I(y)$ for any $x,y\in R$.
\end{enumerate}

From i), one may deduce that $I(0)$ is the maximum of the image of $I$, and from ii), that, if $R$ has unity, $I(1)$ is the minimum. Hence, for any $\alpha \leq I(0)$, we may consider the $\alpha$-cut, $I_\alpha$, as the subset $I_\alpha=\{\text{$x\in R$ such that $I(x)\geq \alpha$}\}$. It is easy to prove that $I$ is a fuzzy ideal if and only if $I_\alpha$ is an ideal of $R$ for any $I(1)< \alpha\leq I(0)$. The ideal $I_{I(0)}$ has a special relevancy in the literature and we shall denote it by $I_*$. Observe also that $I_{I(1)}$ is always the whole ring. Following \cite{Rosenfeld_1971}, given two fuzzy sets $A$ and $B$, we denote by $A\circ B$ the fuzzy set defined by $$(A\circ B)(x)=\bigvee_{x=x_1x_2} (A(x_1)\wedge B(x_2))$$ if $x$ can be decomposed as a product of two elements, and zero otherwise. This is done following the known Zadeh's Extension Principle \cite{Zadeh_1975}. Obviously, if $R$ has unity, any element can be decomposed, at least, trivially. For any fuzzy set $F$, the fuzzy ideal generated by $F$ will be the least ideal containing $F$, i.e., the intersection of all fuzzy ideal $I$ satisfying that $F\leq I$. We shall denote it by $\langle F \rangle$.

Throughout the paper $R$ will be an arbitrary ring with unity. For basic facts in Noncommutative Ring Theory the reader is referred, for instance, to the books \cite{Lam} or \cite{MCRobson}. Following Krull in \cite{Krull_1928} and \cite{Krull_1928b}, we recall the reader that a proper ideal $P$ of a ring $R$ is said to be prime if it verifies the following:

(*) \emph{whenever $IJ\subseteq P$ for some ideals $I$ and $J$, then $I\subseteq P$ or $J\subseteq P$}.\\
 If $R$ is commutative this is equivalent to the well-known property

(**) \emph{whenever $xy\in P$ for some $x,y\in R$, then $x\in P$ or $y\in P$}.\\
 A  ring is said to be prime if the zero ideal is prime. If $R$ is commutative, $R$ is prime if and only if it is a domain. In working in a noncommutative environment, property (**) becomes too restrictive. Actually, one may find simple non-domain rings. Then, noncommutative ring theorists call an ideal satisfying (**) \emph{completely prime} since (*) is equivalent, by \cite{Levitzki_1951} and \cite{Nagata_1951}, to the following:

 (***) \emph{whenever $xRy\subseteq P$ for some $x,y\in R$, then $x\in P$ or $y\in P$}.\\
Hence completely primeness implies primeness but the converse does not hold. In order to clarify this situation for readers non-familiar with noncommutative rings, we show the following easy exercise.

\begin{example}\label{ex}
Let $R$ be the ring of $2\times 2$ matrices over the real numbers. Let us show that the zero ideal is prime following (***), and then, following (*). Indeed, let us suppose that $X=\left(
  \begin{smallmatrix}
    a & b \\
     c & d \\
  \end{smallmatrix}
\right)$ and $Y=\left(
  \begin{smallmatrix}
    e & f \\
     g & h \\
  \end{smallmatrix}
\right)$ are two matrices such that $XTY=\left(
  \begin{smallmatrix}
    0 & 0 \\
     0 & 0 \\
  \end{smallmatrix}
\right)$ for any other matrix $T\in R$. Then, in particular,
$$\begin{array}{c}
X\left(
  \begin{smallmatrix}
    1 & 0 \\
     0 & 0 \\
  \end{smallmatrix}
\right) Y=
\left(
  \begin{smallmatrix}
    a & b \\
     c & d \\
  \end{smallmatrix}
\right)
\left(
  \begin{smallmatrix}
    1 & 0 \\
     0 & 0 \\
  \end{smallmatrix}
\right)
\left(
  \begin{smallmatrix}
    e & f \\
     g & h \\
  \end{smallmatrix}
\right)=
\left(
  \begin{smallmatrix}
    ae & af \\
     ce & cf \\
  \end{smallmatrix}
\right)=0 \Leftrightarrow a=c=0 \text{ or } e=f=0 \\
X\left(
  \begin{smallmatrix}
    0 & 1 \\
     0 & 0 \\
  \end{smallmatrix}
\right) Y=
\left(
  \begin{smallmatrix}
    a & b \\
     c & d \\
  \end{smallmatrix}
\right)
\left(
  \begin{smallmatrix}
    0 & 1 \\
     0 & 0 \\
  \end{smallmatrix}
\right)
\left(
  \begin{smallmatrix}
    e & f \\
     g & h \\
  \end{smallmatrix}
\right)=
\left(
  \begin{smallmatrix}
    ag & ah \\
     cg & ch \\
  \end{smallmatrix}
\right)=0 \Leftrightarrow a=c=0 \text{ or } g=h=0 \\
X\left(
  \begin{smallmatrix}
    0 & 0 \\
     1 & 0 \\
  \end{smallmatrix}
\right) Y=
\left(
  \begin{smallmatrix}
    a & b \\
     c & d \\
  \end{smallmatrix}
\right)
\left(
  \begin{smallmatrix}
    0 & 0 \\
     1 & 0 \\
  \end{smallmatrix}
\right)
\left(
  \begin{smallmatrix}
    e & f \\
     g & h \\
  \end{smallmatrix}
\right)=
\left(
  \begin{smallmatrix}
    be & bf \\
     de & df \\
  \end{smallmatrix}
\right)=0 \Leftrightarrow b=d=0 \text{ or } e=f=0 \\
X\left(
  \begin{smallmatrix}
    0 & 0 \\
     0 & 1 \\
  \end{smallmatrix}
\right) Y=
\left(
  \begin{smallmatrix}
    a & b \\
     c & d \\
  \end{smallmatrix}
\right)
\left(
  \begin{smallmatrix}
    0 & 0 \\
     0 & 1 \\
  \end{smallmatrix}
\right)
\left(
  \begin{smallmatrix}
    e & f \\
     g & h \\
  \end{smallmatrix}
\right)=
\left(
  \begin{smallmatrix}
    bg & bh \\
     dg & dh \\
  \end{smallmatrix}
\right)=0 \Leftrightarrow b=d=0 \text{ or } g=h=0
\end{array}
$$
Hence, a solution must verify that $X=\left(
  \begin{smallmatrix}
    0 & 0 \\
     0 & 0 \\
  \end{smallmatrix}
\right)$ or $Y=\left(
  \begin{smallmatrix}
    0 & 0 \\
     0 & 0 \\
  \end{smallmatrix}
\right)$.

Nevertheless, the zero ideal is not prime following (**), since
$$\left(
  \begin{smallmatrix}
    0 & 1 \\
     0 & 0 \\
  \end{smallmatrix}
\right)
\left(
  \begin{smallmatrix}
    0 & 1 \\
     0 & 0 \\
  \end{smallmatrix}
\right)
=\left(
  \begin{smallmatrix}
    0 & 0 \\
     0 & 0 \\
  \end{smallmatrix}
\right) \quad \text{although} \quad
\left(
  \begin{smallmatrix}
   0 & 1 \\
     0 & 0 \\
  \end{smallmatrix}
\right)
\neq
\left(
  \begin{smallmatrix}
    0 & 0 \\
     0 & 0 \\
  \end{smallmatrix}
\right)$$
\end{example}

\section{A little survey about fuzzy primeness}\label{survey}

Let us consider a ring $R$. Accordingly with crisp Ring Theory, the notion of prime fuzzy  ideal was one of the first concepts under consideration in its fuzzy version. In \cite{MukherjeeSen_1987}, it first appeared under the following definition.

\begin{definition}[D1]
A non-constant fuzzy ideal $P:R\rightarrow [0,1]$ is said to be \emph{prime} if, whenever $I\circ J\leq P$ for some fuzzy ideals $I$ and $J$, it satisfies that $I\leq P$ or $J\leq P$.
\end{definition}

This follows the standard crisp primeness once the product of ideals have been translated to the product $\circ$ of fuzzy ideals. Nevertheless, since $I\circ J$ is not necessarily  a subgroup \cite{MordesonBook}, the product $\circ$ may be replaced by $I J$, given in \cite{SwamySwamy_1988}, where
$$I J(x)=\bigvee_{x=\sum_{i}a_ib_i} \bigwedge_{i} (I(a_i)\wedge J(b_i)).$$
Furthermore, as well as for crisp Ring Theory, Zahedi in \cite{Zahedi_1991} makes use of only left or right fuzzy ideals.
\begin{definition}[D1']
A non-constant fuzzy ideal $P:R\rightarrow [0,1]$ is said to be \emph{prime} if, whenever $I J\leq P$ for some fuzzy ideals $I$ and $J$, it satisfies that $I\leq P$ or $J\leq P$.
\end{definition}

\begin{definition}[D1L-D1R]
A non-constant fuzzy ideal $P:R\rightarrow [0,1]$ is said to be \emph{prime} if, whenever $I J\leq P$ for some fuzzy left (right) ideals $I$ and $J$, it satisfies that $I\leq P$ or $J\leq P$.
\end{definition}

As pointed out in \cite{SwamySwamy_1988}, $IJ=\langle I\circ J\rangle$ the fuzzy ideal generated by $I\circ J$. Then $IJ\leq P$ if and only if $I\circ J\leq P$, and both definitions are equivalent.
In \cite[Theorem 4.9]{Zahedi_1991}, it is also proven that these are equivalent to the one-side definitions. Moreover, Malik and Mordeson \cite{MalikMordeson_1990}, completing the work of Mukherjee and Sen \cite{MukherjeeSen_1987}, and Swamy and Swamy \cite{SwamySwamy_1988} for L-fuzzy ideals, give a nice characterization of all fuzzy D1-prime ideals.

\begin{lemma}\cite{MalikMordeson_1990}\cite{SwamySwamy_1988}\label{D1prime}
A fuzzy ideal $P:R\rightarrow [0,1]$ is prime if and only $P$ is of the form
$$P(x)=\left\{
         \begin{array}{ll}
           1, & \text{if $x\in Q$}  \\
           t, & \text{otherwise}
         \end{array}
       \right. ,$$
where $Q$ is a crisp prime ideal of $R$ and $0\leq t<1$.
\end{lemma}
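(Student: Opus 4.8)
The plan is to prove the characterization in two directions. For the "if" direction, suppose $P$ has the stated form with $Q$ a crisp prime ideal and $0\le t<1$. First I would check that such a $P$ is indeed a fuzzy ideal: since $Q$ is an ideal, the verification of conditions i) and ii) for $P$ reduces to the crisp closure properties of $Q$ together with the trivial inequalities between the two values $1$ and $t$. Then, given fuzzy ideals $I,J$ with $I\circ J\le P$ (equivalently $IJ\le P$ by the remark of Swamy and Swamy), I would argue that if neither $I\le P$ nor $J\le P$, then there exist $a,b\in R$ with $I(a)>P(a)$ and $J(b)>P(b)$; necessarily $a\notin Q$ and $b\notin Q$, so $P(a)=P(b)=t$ and $I(a),J(b)>t$. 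Using that $Q$ is prime, I would produce an element $x\in aRb$ with $x\notin Q$ (via property (***)), so $P(x)=t$; but writing $x=a r b$ and estimating $(I\circ J)(x)\ge I(a)\wedge (rb$-term$)$ — more carefully, $(I\circ J)(x)\ge I(ar)\wedge J(b)\ge I(a)\wedge J(b)>t = P(x)$, using $I(ar)\ge I(a)$ — contradicts $I\circ J\le P$. A small subtlety: one must pass through $aRb$ rather than the single product $ab$, precisely because primeness in the noncommutative sense is property (*)/(***), not (**).

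For the "only if" direction, let $P$ be a fuzzy prime ideal. Since $P$ is non-constant, its image contains at least two values; let $1\ge s_0=P(0)$ be the maximum and pick any value $t<s_0$ attained by $P$. The key step is to show the image of $P$ has exactly two elements and that the top value is $1$. I would first show $P(0)=1$: consider the fuzzy ideals that are constantly $s_0$ on all of $R$ — actually the cleaner route is to use level cuts. Recall $I$ is a fuzzy ideal iff every cut $I_\alpha$ (for $P(1)<\alpha\le P(0)$) is a crisp ideal. Set $Q=P_{P(0)}=P_*$. I would show $Q$ is a crisp prime ideal: if $AB\subseteq Q$ for crisp ideals $A,B$, form the fuzzy ideals $\chi_A^{s_0}$ and $\chi_B^{s_0}$ taking value $s_0=P(0)$ on $A$ (resp. $B$) and $0$ elsewhere — these are fuzzy ideals since $A,B$ are ideals — and check their $\circ$-product lands $\le P$, whence one of them is $\le P$, giving $A\subseteq Q$ or $B\subseteq Q$. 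Then $Q$ is a proper prime ideal (proper because $P$ is non-constant, so $P(1)<P(0)$).

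The remaining and in some sense most delicate point is that $P$ takes only the two values $1$ and $t$, i.e. that $P$ is constant off $Q$ and that $P(0)=1$. For the "constant off $Q$" part: if $P(a)\ne P(b)$ for some $a,b\notin Q$, say $P(a)<P(b)$, I would engineer fuzzy (principal) ideals whose product violates the prime condition, exploiting that $a,b$ generate ideals not contained in $Q$; the mechanism is again to build fuzzy ideals constant on $\langle a\rangle$ resp. $\langle b\rangle$ at well-chosen heights and compare with $P$ on an element of $aRb\setminus Q$. To see $P(0)=1$: if $P(0)=s_0<1$, then the fuzzy ideal $I=\chi_R^{s_0}$ (constantly $s_0$) satisfies $I\circ I\le I\le P$ trivially while $I\not\le P$ would fail — so instead one notes $R\cdot R\subseteq R$ forces, via the prime condition applied to suitable fuzzy ideals strictly above $s_0$ somewhere, a contradiction; more directly, since $P$ is prime it is in particular non-constant and its cuts are prime ideals, and a standard normalization argument (replacing $P$ by $P$ composed with an order-isomorphism of $[0,1]$ fixing the relevant values, or simply invoking that the Mukherjee–Sen/Malik–Mordeson argument scales the top value to $1$) yields $P(0)=1$.

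**The hard part will be** the normalization showing the top value must be exactly $1$ and that there is a single value below it — the closure of $I\circ J$ under forming subgroups (handled by replacing $\circ$ with the product $IJ=\langle I\circ J\rangle$), and the fact that in the noncommutative setting one must systematically route products through $xRy$ rather than $xy$, are the two places where one has to be careful. Since this lemma is quoted from \cite{MalikMordeson_1990} and \cite{SwamySwamy_1988}, I would present the proof in the streamlined two-direction form above and relegate the routine fuzzy-ideal verifications to a sentence, emphasizing the use of property (***) in the construction of the witness element of $aRb\setminus Q$.
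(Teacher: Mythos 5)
The paper itself offers no proof of this lemma --- it is quoted from \cite{MalikMordeson_1990} and \cite{SwamySwamy_1988} --- so your argument has to stand on its own. Your ``if'' direction does: routing the product through $aRb$ via property (***) and estimating $(I\circ J)(arb)\ge I(ar)\wedge J(b)\ge I(a)\wedge J(b)>t=P(arb)$ is exactly right, and so is your proof that $P_*$ is a crisp prime ideal using characteristic-function fuzzy ideals. The genuine gap is in the half of the ``only if'' direction that you yourself flag as delicate, namely that $\mathrm{Im}(P)=\{t,1\}$ with top value $1$. None of your three fallbacks works: an order-automorphism of $[0,1]$ fixes the endpoints, so no ``normalization'' can move $P(0)<1$ up to $1$; the phrase ``its cuts are prime ideals'' is circular, since D1-primeness implying prime cuts (D2-primeness) is deduced in this paper only \emph{through} the present lemma; and ``invoking the Mukherjee--Sen/Malik--Mordeson argument'' is citing the statement to be proved. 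The sketch for constancy off $Q$ (``engineer principal fuzzy ideals at well-chosen heights'') never names a construction that can be checked.

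Both missing claims follow from one explicit construction, which is what you need to supply. Suppose there exist $a,b\in R$ with $P(a)<P(b)<1$, and set $c=P(b)$. Let $I$ be the constant fuzzy ideal with value $c$, and let $J=\chi_{P_c}$ be the characteristic function of the level cut $P_c=\{x\in R: P(x)\ge c\}$, which is a crisp ideal because $P(1)\le P(a)<c\le P(0)$. For any $z$, each term of $(I\circ J)(z)=\bigvee_{z=xy}\bigl(c\wedge\chi_{P_c}(y)\bigr)$ equals $c$ when $y\in P_c$ (in which case $z=xy\in P_c$, so $P(z)\ge c$) and $0$ otherwise; hence $I\circ J\le P$. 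Yet $I(a)=c>P(a)$ and $J(b)=1>c=P(b)$, so neither $I\le P$ nor $J\le P$, contradicting D1-primeness. Consequently any two distinct values $u<v$ of $P$ must satisfy $v=1$; since $P$ is non-constant this forces $\mathrm{Im}(P)=\{t,1\}$ with $t<1$ and $P(0)=1$. Then $Q=P_*$ is the unique proper level cut, your characteristic-function argument shows it is prime, and the proof closes.
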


Albeit the former result makes fuzzy D1-primeness clear and transparent, the reader may figure out a subtle problem about its usefulness: there is no plenty more fuzzy D1-prime ideals than crisp ones. Hence, the additional information about the ring provided by fuzzy D1-prime ideals is quite reduced.

Additionally, as pointed out in \cite{Kumar_1992}, the notion seems to be too strong, since there are fuzzy ideals whose level cuts are prime, despite of they are not D1-prime. For instance, by \cite{Kumar_1992}, the fuzzy ideal of the ring of integers
$$P(x)=\left\{
         \begin{array}{ll}
           1, & \text{if $x=0$} \\
           0.8, & \text{if $x$ is even a non-zero}\\
            0.6, & \text{otherwise}
         \end{array}
       \right.$$
$P$ is not D1-prime since it is three-valued despite of each level cut is a prime ideal of $\mathbb{Z}$.

Since the first inconvenient becomes much more difficult to resolve, an evident solution to the second one is defining primeness as the property that it should verify.

\begin{definition}[D2]
A non-constant fuzzy ideal $P:R\rightarrow [0,1]$ is said to be \emph{prime} if $P_\alpha$ is prime for any $P(0)\geq \alpha >P(1)$.
\end{definition}

As the reader may think in reading this definition, it only translates our problem since now we need a characterization which makes operational D2-primeness.

In \cite{KumbhojkarBapat_1990}, Kumbhojkar and Bapat deal with fuzzy prime ideals from a element-like perspective and they state primeness looking forward similarities with the standard commutative algebra.

\begin{definition}[D3]
A non-constant fuzzy ideal $P:R\rightarrow [0,1]$ is said to be \emph{prime} if, for any $x,y\in R$, whenever $P(xy)=P(0)$, then $P(x)=P(0)$ or $P(y)=P(0)$.
\end{definition}

They give D3-primeness in order to get a fuzzy version of a well-known result in Ring Theory, namely, the quotient of a ring by a prime ideal is a prime ring, see \cite{KumbhojkarBapat_1990}. Nevertheless, since the quotient $R/P$ is exactly $R/P_*$, D3-primeness is equivalent to $P_*$ being a prime ideal of $R$. Then, trivially, D2-primeness implies D3-primeness, although the converse does not hold. For example,
$$P(x)=\left\{
         \begin{array}{ll}
           1, & \text{if $x=0$} \\
           0.8, & \text{if $x=4t$ with $t\neq 0$}\\
            0.6, & \text{otherwise}
         \end{array}
       \right.$$
is D3-prime but it is not D2-prime. For this reason, the authors give a stronger notion which they call strongly primeness.

\begin{definition}[D4]
A non-constant fuzzy ideal $P:R\rightarrow [0,1]$ is said to be \emph{prime} if, for any $x,y\in R$, $P(xy)=P(x)$ or $P(xy)=P(y)$.
\end{definition}

Obviously, D3-primeness is weaker that D4-primeness, but we are interested in the relation between D2-primeness and D4-primeness. Following \cite[Proposition]{KumbhojkarBapat_1993}, both notions are equivalent. Nevertheless, although the authors do not mention it, the proof requires commutativity since it makes use of the aforementioned property (**). The same mistake is committed in \cite[Theorem 5.3]{GuptaKantroo_1993}. In general, the equivalence becomes false as the following example shows.

\begin{example}\label{exD2notD4}
Let $R$ be the ring of $2\times 2$ matrices over the real numbers. Hence we may consider the fuzzy ideal
$$P(x)=\left\{
         \begin{array}{ll}
           1, & \text{if $x$ is the zero matrix} \\
           0, & \text{otherwise}
         \end{array}
       \right.$$
By Example \ref{ex}, the zero ideal is prime, therefore $P$ is D2-prime and D1-prime.
Nevertheless,
$$P\left(\left(   \begin{smallmatrix}
               0 & 1 \\
               0 & 0 \\
             \end{smallmatrix} \right) \left(
             \begin{smallmatrix}
               0 & 1 \\
               0 & 0 \\
             \end{smallmatrix}
           \right) \right) =P \left( \left(
             \begin{smallmatrix}
               0 & 0 \\
               0 & 0 \\
             \end{smallmatrix}
            \right)\right)=1 \quad \text{whilst} \quad P\left(\left(
             \begin{smallmatrix}
               0 & 1 \\
               0 & 0 \\
             \end{smallmatrix}
           \right) \right)=0$$
so $P$ is not D4-prime.
\end{example}

Example \ref{exD2notD4} is coherent with the circumstances which appear when dealing with prime (*) and completely prime (**) ideals over a noncommutative ring. Therefore, one may expect similar properties on fuzzy ideals.

\begin{lemma} A fuzzy ideal $P$ is D4-prime if and only if each level cut $P_\alpha$ is completely prime for all $P(0)\geq \alpha >P(1)$.
\end{lemma}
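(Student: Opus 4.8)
The plan is to prove the two implications separately, in each case reducing everything to the defining inequality ii) of a fuzzy ideal, which already gives $P(xy)\geq P(x)\vee P(y)$ for all $x,y\in R$. First I would prove that D4-primeness forces the cuts to be completely prime. Assume $P$ is D4-prime and fix $\alpha$ with $P(0)\geq\alpha>P(1)$ (such $\alpha$ exist since $P$, being non-constant, satisfies $P(0)>P(1)$). By the characterization of fuzzy ideals recalled above, $P_\alpha$ is an ideal, and it is proper because $1\notin P_\alpha$, as $P(1)$ is the minimum of the image of $P$ and $P(1)<\alpha$. Now take $x,y\in R$ with $xy\in P_\alpha$, i.e. $P(xy)\geq\alpha$; by D4-primeness either $P(x)=P(xy)\geq\alpha$ or $P(y)=P(xy)\geq\alpha$, so $x\in P_\alpha$ or $y\in P_\alpha$, and $P_\alpha$ is completely prime.

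For the converse, assume every $P_\alpha$ with $P(0)\geq\alpha>P(1)$ is completely prime, and let $x,y\in R$. Since $P(xy)\geq P(x)\vee P(y)$, it suffices to show $P(xy)\leq P(x)$ or $P(xy)\leq P(y)$. Put $\alpha=P(xy)$ and note $P(1)\leq\alpha\leq P(0)$. If $\alpha=P(1)$, then $P(1)\leq P(x)\leq P(xy)=P(1)$ gives $P(x)=P(xy)$ and we are done. If $\alpha>P(1)$, then $xy\in P_\alpha$, and since $P_\alpha$ is completely prime we get $x\in P_\alpha$ or $y\in P_\alpha$; in the first case $P(x)\geq\alpha=P(xy)$, in the second $P(y)\geq\alpha=P(xy)$, and combined with $P(xy)\geq P(x)\vee P(y)$ this forces $P(xy)=P(x)$ or $P(xy)=P(y)$. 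Hence $P$ is D4-prime.

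The argument is essentially a dictionary translation between the element-wise formulation D4 and the level-cut formulation, so no serious obstacle is expected. The only points that demand care are the bookkeeping on the admissible range of $\alpha$, namely keeping the cuts proper and matching the endpoints $P(0)$ and $P(1)$, and the degenerate case $P(xy)=P(1)$, which must be disposed of by hand since the hypothesis says nothing about the cut $P_{P(1)}=R$.
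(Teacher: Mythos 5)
Your proof is correct; the paper itself gives no details here, simply citing \cite[Proposition 4.2]{KumbhojkarBapat_1993}, and your argument is exactly the standard level-cut translation that reference carries out. The two points you flag as needing care --- restricting to the admissible range $P(0)\geq\alpha>P(1)$ and disposing of the degenerate case $P(xy)=P(1)$ by hand using $P(xy)\geq P(x)\vee P(y)$ --- are indeed the only delicate steps, and you handle both correctly.
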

\begin{proof}
The proof is the same as in \cite[Proposition 4.2]{KumbhojkarBapat_1993}.
\end{proof}

\begin{lemma}\label{D4thenD2}
Let $R$ be an arbitrary ring with unity and $P:R\rightarrow [0,1]$ be a fuzzy ideal. If $P$ is D4-prime then it is D2-prime.
\end{lemma}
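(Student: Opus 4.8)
The plan is to derive D2-primeness from the level-cut characterization of D4-primeness established in the previous lemma, together with the elementary fact, recalled among the preliminaries, that a completely prime ideal is in particular prime.

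Concretely, I would fix a real number $\alpha$ with $P(0)\geq\alpha>P(1)$. Since $P$ is a fuzzy ideal, $P_\alpha$ is an ideal of $R$, and it is proper because $P(1)<\alpha$ forces $1\notin P_\alpha$. By the preceding lemma, the hypothesis that $P$ is D4-prime yields that $P_\alpha$ is completely prime, i.e.\ it satisfies property (**). To conclude that $P_\alpha$ is prime, suppose $I$ and $J$ are ideals of $R$ with $IJ\subseteq P_\alpha$ but $I\not\subseteq P_\alpha$ and $J\not\subseteq P_\alpha$; picking $a\in I\setminus P_\alpha$ and $b\in J\setminus P_\alpha$ we would get $ab\in IJ\subseteq P_\alpha$, contradicting (**). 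Hence $P_\alpha$ is prime for every admissible $\alpha$, which is exactly the assertion that $P$ is D2-prime. Alternatively, one can bypass the previous lemma and argue directly from the definition: if $ab\in P_\alpha$ while $a\notin P_\alpha$ and $b\notin P_\alpha$, then $P(ab)\geq\alpha$ whereas $P(a)<\alpha$ and $P(b)<\alpha$, contradicting the D4 condition $P(ab)=P(a)$ or $P(ab)=P(b)$; this shows each $P_\alpha$ satisfies (**), and then (*) as above.

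There is no serious obstacle here; the argument is a routine verification. The only points deserving a moment's attention are that $P_\alpha$ must be checked to be a \emph{proper} ideal, which is precisely the role of the restriction $\alpha>P(1)$ in the definition of D2-primeness, and that the range of admissible $\alpha$ is non-empty, which holds because a D4-prime ideal is by definition non-constant.
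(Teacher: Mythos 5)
Your proof is correct and follows essentially the same route as the paper: invoke the preceding lemma to get that each level cut is completely prime, then use that completely prime implies prime. The paper's own proof is exactly this two-line observation, so your additional direct verification is just an unpacking of the same argument.
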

\begin{proof}
It follows from the former lemma and since completely primeness implies primeness.
\end{proof}

\begin{remark} Observe that a slightly modification of the ideal of Example \ref{exD2notD4}, simply map the zero matrix to $t$ with $0<t<1$, provides a fuzzy ideal which is D2-prime but not D1-prime.
\end{remark}

In trying to get a certain element-like definition, Zahedi in \cite{Zahedi_1991} analyzes two notions involving the so-called singletons. We recall the reader that, given an element $x\in R$ and $t\in (0,1]$, the singleton $x_t$ is defined to be the fuzzy set $x_t(x)=t$ and zero otherwise.

\begin{definition}[D0]
A non-constant fuzzy ideal $P:R\rightarrow [0,1]$ is said to be \emph{prime} if, whenever $x_ty_s\leq P$ for any singletons $x_t$ and $y_s$, then $x_t\leq P$ or $y_s\leq P$. This is called completely prime by Zahedi in \cite[Definition 2.7]{Zahedi_1991}.
\end{definition}

\begin{definition}[D$0'$]
A non-constant fuzzy ideal $P:R\rightarrow [0,1]$ is said to be \emph{prime} if, whenever $\langle x_t\rangle \langle y_s\rangle\leq P$ for any singletons $x_t$ and $y_s$, then $x_t\leq P$ or $y_s\leq P$.
\end{definition}

In \cite[Theorem 4.9]{Zahedi_1991}, D$0'$-primeness is proving to be equivalent to D1-primeness. D0-primeness is proven to be equivalent to D1-primeness in \cite{KumbhojkarBapat_1993}, see also \cite[Theorem 2.6]{MalikMordeson_1992}. Nevertheless, once again, the proof is no longer valid when $R$ is an arbitrary ring. The gap underlies in the proof of \cite[Theorem 3.5 i)]{KumbhojkarBapat_1993}, where the implication $\langle x_ry_s\rangle\subseteq P \Rightarrow \langle x_r\rangle \circ \langle y_s\rangle \subseteq P$ only can be applied when $R$ is commutative as the following example shows.

\begin{example}
Let us consider $R=\mathcal{M}_{2}(\mathbb{R})$ the ring of $2\times 2$ matrices over the real numbers. Let $P$ be the fuzzy ideal given by
$$P(x)=\left\{
         \begin{array}{ll}
           1, & \text{if $x$ is the zero matrix} \\
           0, & \text{otherwise}
         \end{array}
       \right. $$
and $x_1$ be the singleton of the element $x=\left(
  \begin{smallmatrix}
    0 & 1 \\
     0 & 0 \\
  \end{smallmatrix}
\right).$
Now, $x_1\circ x_1(z)=\bigvee_{z=z_1z_2} (x_1(z_1)\wedge x_1(z_2))\neq 0$ if and only if $z_1=z_2=x$, that is, if and only if $z=\left(
  \begin{smallmatrix}
    0 & 0 \\
     0 & 0 \\
  \end{smallmatrix}
\right)$. Then $x_1\circ x_1=x_1x_1=P$. Nevertheless, $\langle x_1\rangle=R$ and then  $\langle x_1\rangle\circ \langle x_1\rangle = R \nsubseteq P$.

Observe that $P$ is a fuzzy D1-prime, since $R$ is a prime ring and, by Lemma \ref{D1prime}, $x_1\nleq P$. Nevertheless, $x_1\nleq P$ and then $P$ is not D0-prime. This contradicts \cite[Theorem 3.5]{KumbhojkarBapat_1993} and \cite[Theorem 2.6]{MalikMordeson_1992} if we omit the commutativity condition on the ring.
\end{example}

As a summary, we recover all the facts of this section in the diagram of Figure \ref{figprimeness}. By $\complement$ we mean that the implication needs commutativity.
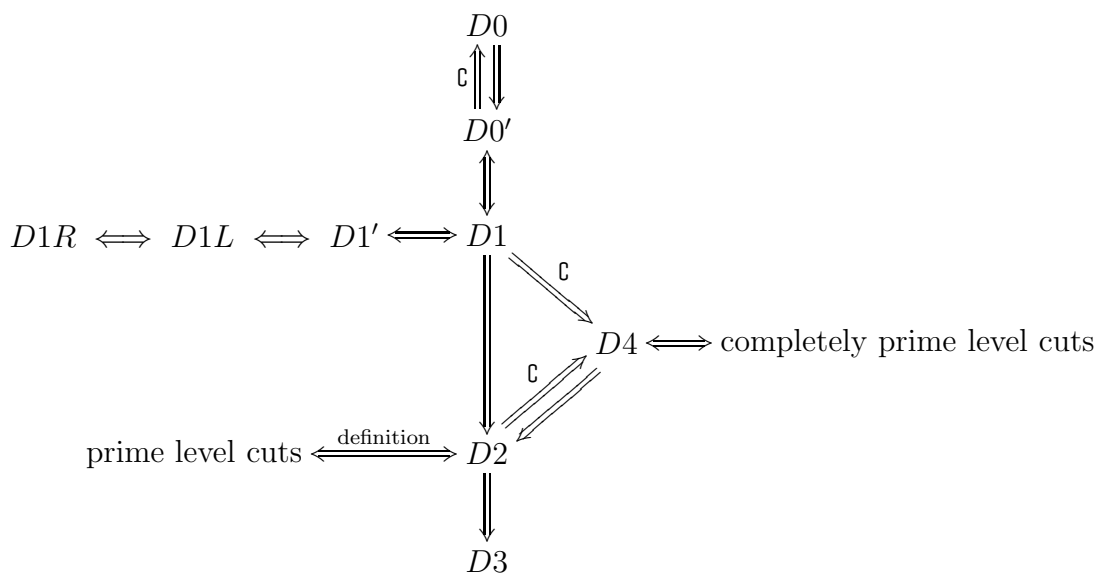
\begin{figure}[h]
$$\xymatrix@W=0.5cm{
 &   & D0  \ar@<0.7ex>@2{->}[d]& & &\\
 &   & D0' \ar@2{<->}[d]\ar@<0.7ex>@2{->}[u]^-{\complement}& & &\\
 &D1R \iff D1L \iff D1' \ar@2{<->}[r]& D1 \ar@2{->}[dr]^-{\complement} \ar@2{->}[dd] & & \\
 &  & & D4 \ar@2{<->}[r]\ar@<0.7ex>@2{->}[dl]& \text{completely prime level cuts} \\
 &  \text{prime level cuts} \ar@2{<->}[r]^-{\text{definition}} &  D2\ar@<0.7ex>@2{->}[ru]^-{\complement}\ar@2{->}[d] & \\
 &   &  D3        &  &\\
}$$
\caption{Notions of fuzzy primeness}\label{figprimeness}
    \end{figure}

As the reader may see, in a general setting, there is no characterization of primeness consistent with the level cuts.

\section{Fuzzy primeness over noncommutative rings}\label{fuzzyprime}

In this section we introduce a new definition of fuzzy prime ideals, compatible with the primeness of the level cuts, and available for any (non-necessarily commutative) ring with unity. A (crisp) ideal $P$ of $R$ is said to be prime if, whenever $IJ\leq P$ with $I$ and $J$ (left, right) ideals of $R$, then $I\leq P$ or $J\leq P$. Equivalently, $P$ is prime if and only if, whenever $xRy\subseteq P$ for some $x,y\in R$, then $x\in P$ or $y\in P$.

\begin{definition}\label{defprime} Let $R$ be an arbitrary ring with unity. A non-constant fuzzy ideal $P:R\rightarrow [0,1]$ is said to be \emph{prime} if, for any $x,y\in R$, $\mathrm{Inf} P(xRy)=P(x)\vee P(y)$.
\end{definition}

\begin{proposition}\label{charprime} Let $R$ be an arbitrary ring with unity and $P:R\rightarrow [0,1]$ be a non-constant fuzzy ideal of $R$. The following are equivalent:
\begin{enumerate}[$a)$]
\item $P$ is prime.
\item $P_\alpha$ is prime for all $P(0)\geq \alpha >P(1)$.
\item $R/P_\alpha$ is a prime ring for all $P(0)\geq \alpha >P(1)$.
\item For any fuzzy ideal $I$, if $I(xry)\leq P(xry)$ for all $r\in R$ then $I(x)\leq P(x)$ or $I(y)\leq P(y)$.
\end{enumerate}
Moreover, if $R$ is commutative, any of these statements is equivalent to $P$ being D4-prime.
\end{proposition}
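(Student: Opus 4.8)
The plan is to prove the chain of equivalences $a) \Leftrightarrow b) \Leftrightarrow c) \Leftrightarrow d)$, and then add the commutative addendum at the end. The equivalence $b) \Leftrightarrow c)$ is essentially free: as recalled in the excerpt, $R/P_\alpha$ is a prime ring precisely when the zero ideal of $R/P_\alpha$ is prime, which by the correspondence theorem is exactly the assertion that $P_\alpha$ is a prime ideal of $R$; so I would dispose of this first with one line. Likewise the final clause follows immediately by combining $b)$ with the earlier Lemma \ref{D4thenD2} and the commutative equivalence of (*) and (**) recorded before Example \ref{ex}: when $R$ is commutative, $P_\alpha$ prime for all relevant $\alpha$ is the same as $P_\alpha$ completely prime for all such $\alpha$, which by the first Lemma of Section 3 is D4-primeness.

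For $a) \Rightarrow b)$, I would fix $\alpha$ with $P(1) < \alpha \le P(0)$ and show $P_\alpha$ is prime using the $xRy$-criterion for crisp primeness. Suppose $xRy \subseteq P_\alpha$, i.e. $P(xry) \ge \alpha$ for all $r \in R$; then $\Inf P(xRy) \ge \alpha$, so primeness of $P$ gives $P(x) \vee P(y) \ge \alpha$, hence $x \in P_\alpha$ or $y \in P_\alpha$. One must also check $P_\alpha$ is proper: since $P$ is non-constant and $P(1)$ is the minimum value of $P$ while $\alpha > P(1)$, the element $1 \notin P_\alpha$. Conversely, for $b) \Rightarrow a)$, fix $x, y \in R$ and set $\beta = \Inf P(xRy)$; I need $\beta = P(x) \vee P(y)$. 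The inequality $\beta \ge P(x) \vee P(y)$ is automatic from the fuzzy ideal axioms (since $P(xry) \ge P(x) \vee P(y)$ for every $r$, in particular taking $r = 1$ if needed). For the reverse inequality $\beta \le P(x) \vee P(y)$, the natural move is: for every $\alpha \le \beta$ with $\alpha > P(1)$ we have $xRy \subseteq P_\alpha$, so primeness of $P_\alpha$ forces $x \in P_\alpha$ or $y \in P_\alpha$, i.e. $P(x) \ge \alpha$ or $P(y) \ge \alpha$, whence $P(x) \vee P(y) \ge \alpha$; letting $\alpha \uparrow \beta$ yields $P(x) \vee P(y) \ge \beta$. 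The delicate point here — and the main obstacle — is the boundary behaviour: one needs to handle the case $\beta \le P(1)$ (then $P(x) \vee P(y) \le \beta$ must still be argued, but this forces $x, y$ into the improper cut so $P(x) = P(y) = P(1) \le \beta$ is immediate after a small argument using that $\langle x \rangle \langle y \rangle$-type products land where expected), and the case where $\beta$ is not attained, so one genuinely needs the supremum-over-$\alpha$ limiting argument rather than a single application at $\alpha = \beta$.

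For the equivalence with $d)$, I would argue $b) \Rightarrow d)$ and $d) \Rightarrow a)$ (or $\Rightarrow b)$). For $b) \Rightarrow d)$: given a fuzzy ideal $I$ with $I(xry) \le P(xry)$ for all $r$, suppose for contradiction $I(x) > P(x)$ and $I(y) > P(y)$. Put $\alpha = I(x) \wedge I(y)$; then $\alpha > P(x) \vee P(y)$ is false in general — rather one picks $\alpha$ strictly between $\max(P(x),P(y))$ and $\min(I(x),I(y))$ when such a gap exists, being careful when it does not. Then $x, y \in I_\alpha$ but $x \notin P_\alpha$, $y \notin P_\alpha$; since $I_\alpha$ is an ideal, $xRy \subseteq I_\alpha$, so $I(xry) \ge \alpha$ for all $r$, hence $P(xry) \ge \alpha$ for all $r$, i.e. $xRy \subseteq P_\alpha$, contradicting primeness of $P_\alpha$. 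The conclusion would need a short limiting argument again to cover the case where no strict gap exists, reducing to comparing $\Inf P(xRy)$ directly. For $d) \Rightarrow a)$: apply $d)$ to the principal fuzzy ideal generated by a well-chosen singleton $x_t$ with $t = P(x) \vee P(y)$ plus a bit, or more simply to $I = P$ itself combined with cut arguments, to force $\Inf P(xRy) \le P(x) \vee P(y)$, the reverse inequality again being the free one. The anticipated friction throughout is purely the infimum/boundary bookkeeping — choosing $\alpha$'s in half-open intervals and passing to the limit — rather than any structural difficulty; once $b)$ is in hand, everything else is crisp ring theory transported cut-by-cut.
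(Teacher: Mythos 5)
Your overall architecture ($a\Leftrightarrow b$, $b\Leftrightarrow c$ trivially, then $d)$, then the commutative addendum) matches the paper's, and your treatment of $a)\Leftrightarrow b)\Leftrightarrow c)$ and of the commutative case is correct. Indeed, your worries about the infimum not being attained in $b)\Rightarrow a)$ are unnecessary: setting $t=\mathrm{Inf}\,P(xRy)$, the cut $P_t$ is defined by the non-strict inequality $P(z)\geq t$, and an infimum is in particular a lower bound, so $xRy\subseteq P_t$ holds on the nose; a single application of primeness of $P_t$ (a legitimate cut, since $t>P(x)\vee P(y)\geq P(1)$ in the only nontrivial case and $t\leq P(0)$) finishes the argument with no limiting process. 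That is exactly what the paper does.

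The two directions involving $d)$, however, contain genuine gaps. In $b)\Rightarrow d)$ you want $\alpha$ with $\max(P(x),P(y))<\alpha\leq\min(I(x),I(y))$ so that $x,y\in I_\alpha$ and $x,y\notin P_\alpha$; but such $\alpha$ need not exist (take $P(x)=0.9$, $I(x)=1$, $P(y)=0.1$, $I(y)=0.2$), and no limiting argument repairs this, because for these values there is literally no cut containing both $x$ and $y$ in $I$ while excluding both from $P$. The repair is to notice that only one of $x,y$ need lie in $I_\alpha$ to get $xRy\subseteq I_\alpha$ (as $I_\alpha$ is two-sided), so the correct interval is $(P(x)\vee P(y),\,I(x)\vee I(y)]$, which is always nonempty when $I(x)>P(x)$ and $I(y)>P(y)$. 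The paper avoids cuts here altogether: $P(xry)\geq I(xry)\geq I(x)\vee I(y)>P(x)\vee P(y)$ for all $r$, directly contradicting $a)$. In $d)\Rightarrow a)$, taking $I=P$ gives nothing (hypothesis and conclusion of $d)$ both hold trivially), and the fuzzy ideal generated by a single singleton $x_t$ will in general satisfy $I(y)\leq P(y)$, so $d)$ is not violated by it. The missing construction is the truncation $I(z)=P(z)$ if $P(z)\geq t$ and $I(z)=t$ otherwise, for any $t$ strictly between $P(x)\vee P(y)$ and $\mathrm{Inf}\,P(xRy)$: this is a fuzzy ideal with $I(xry)=P(xry)$ for all $r$ but $I(x)=I(y)=t$ strictly above both $P(x)$ and $P(y)$, contradicting $d)$. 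Without some such witness, $d)\Rightarrow a)$ is not established.
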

\begin{proof}
$a)\Rightarrow b)$. Let $P(0)\geq \alpha >P(1)$ and $x,y\in R$ such that $xRy\subseteq P_\alpha$. Then $P(xry)\geq \alpha$ for all $r\in R$ and then $\bigwedge_{r\in R} P(xry)=P(x)\vee P(y)\geq \alpha$. Hence $P(x)\geq \alpha$ or $P(y)\geq \alpha$, so $x\in P_\alpha$ or $y\in P_\alpha$.

$b)\Rightarrow a)$. If $\mathrm{Inf} P(xRy)>P(x)\vee P(y)$ for some $x,y\in R$, consider $t=\mathrm{Inf} P(xRy)$. Hence $P_t$ is not prime since $xRy\subseteq P_t$ although $x,y\notin P_t$.

$b) \Leftrightarrow c)$ is given by the definition of prime ring.

$a)\Rightarrow d)$.  Let us suppose that $d)$ does not hold, i.e., there exist certain fuzzy ideal $I$ and $x,y\in R$ such that $I(xry)\leq P(xry)$ for all $r\in R$, and $I(x)>P(x)$ and $I(y)>P(y)$. Then $P(xry) \geq I(xry) \geq I(x)\vee I(y)> P(x)\vee P(y)$. Hence $\mathrm{Inf}P(xRy)>P(x)\vee P(y)$.

$d)\Rightarrow a)$. Suppose that $\mathrm{Inf} P(xRy)>P(x)\vee P(y)$ for some $x,y\in R$. Then there exists $t\in (0,1)$ such that $\mathrm{Inf} P(xRy)>t>P(x)\vee P(y)$. We define the ideal $I:R\rightarrow [0,1]$ given by
$$I(z)=\left\{
         \begin{array}{ll}
           P(z), & \text{if $P(z)\geq t$} \\
           t, & \text{otherwise}
         \end{array}
       \right. $$
This is a fuzzy ideal with $t<I(xry)=P(xry)$ for all $r\in R$, but $t=I(x)=I(y)>P(x)\vee P(y)$.

Finally, if $R$ is commutative $P(xry)=P(xyr)\geq P(xy)\vee P(r)\geq P(xy)$ for any $x,y,r\in R$. Then $\mathrm{Inf} P(xRy)=P(xy)$ for any $x,y\in R$. So D4-primeness condition is equivalent to $a)$.
\end{proof}

Then we may say that a ring is prime if and only if the zero ideal, viewed as a fuzzy ideal, is prime. Moreover, the ring is prime if and only if any zero-type fuzzy ideal is prime. By a zero-type ideal, we mean a fuzzy ideal $\mathbb{O}$ of the form $\mathbb{O}(0)=t$ and $\mathbb{O}(x)=s$ for $x\neq 0$, where $0\leq s<t\leq 1$. In other words, it is equivalent to the zero ideal under the following equivalence relation: two fuzzy ideals $I$ and $J$ are equivalent if $I(x)>I(y)$ if and only if $J(x)>J(y)$ for any $x,y\in R$.

\begin{remark}
Note that if $P$ is a prime fuzzy ideal, then the quotient $R/P\cong R/P_*$ is a prime ring. However, the converse does not hold since it is only equivalent to $P_*$ being prime.
\end{remark}

As well as with maximality, see \cite{MalikMordeson_1991}, we cannot assure the existence of a minimal prime fuzzy ideal. For instance, consider a minimal prime (crisp) ideal $P$ of a ring with unity $R$, which always exists, and $\chi_P$ the fuzzy prime ideal given by its characteristic map. We work over the family of fuzzy ideals $\Theta=\{(\chi_P)_t\}_{0\leq t\leq 1}$, given by $(\chi_P)_t(x)=t$ if $t\in P$, and zero otherwise. $\Theta$ is a chain of prime fuzzy ideals. Nevertheless, the minimum is the zero-constant map.

We may mend this gap, partially, making use of the equivalence relation defined above. Then we say that a prime fuzzy ideal $P$ is minimal if $P$ is equivalent to the characteristic map of a minimal prime ideal.

\begin{proposition}
Any prime fuzzy ideal contains a minimal prime fuzzy ideal.
\end{proposition}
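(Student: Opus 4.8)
The plan is to descend from $P$ to a characteristic-type fuzzy ideal attached to a crisp minimal prime sitting inside $P_*$. First, since $P$ is non-constant we have $P(0)>P(1)$, so taking $\alpha=P(0)$ in Proposition \ref{charprime} shows that $P_*=P_{P(0)}$ is a (crisp) prime ideal of $R$. Next I would invoke the standard fact of crisp ring theory that every prime ideal of a ring with unity contains a minimal prime ideal: ordering the set of prime ideals contained in $P_*$ by reverse inclusion and using that the intersection of a descending chain of prime ideals is again prime, Zorn's lemma yields a minimal prime ideal $\mathfrak{p}\subseteq P_*$ (minimality inside $P_*$ forces minimality in $R$, since any prime contained in $\mathfrak{p}$ is also contained in $P_*$).

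Then I would define $Q:R\rightarrow[0,1]$ by $Q(x)=P(0)$ if $x\in\mathfrak{p}$ and $Q(x)=0$ otherwise. A routine check shows $Q$ is a fuzzy ideal, this being exactly the zero-type construction over the ideal $\mathfrak{p}$ and using that $\mathfrak{p}$ is an ideal. It is prime by Proposition \ref{charprime}: since $P(0)>0$, for every $\alpha$ with $P(0)\geq\alpha>0$ one has $Q_\alpha=\mathfrak{p}$, which is prime. Moreover $Q$ is equivalent, in the sense of the equivalence relation introduced above, to the characteristic map $\chi_{\mathfrak{p}}$: both $Q$ and $\chi_{\mathfrak{p}}$ take exactly two values and $Q(x)>Q(y)\Leftrightarrow x\in\mathfrak{p},\ y\notin\mathfrak{p}\Leftrightarrow\chi_{\mathfrak{p}}(x)>\chi_{\mathfrak{p}}(y)$. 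Hence $Q$ is a minimal prime fuzzy ideal in the sense defined just before Proposition \ref{charprime}.

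Finally I would verify $Q\leq P$. If $x\notin\mathfrak{p}$ then $Q(x)=0\leq P(x)$. If $x\in\mathfrak{p}\subseteq P_*$ then $P(x)=P(0)=Q(x)$. So $Q\leq P$, which gives the statement. The only non-formal ingredient is the crisp fact that prime ideals contain minimal primes; everything else is bookkeeping, and the single point requiring care is to use the value $P(0)$ (and not $1$) as the height of $Q$, so that $Q$ remains below $P$ while still being non-constant and prime.
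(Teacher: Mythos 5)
Your proof is correct and follows essentially the same route as the paper's: pass to the crisp prime $P_*$, take a minimal prime ideal $\mathfrak{p}$ inside it, and lift its characteristic map to a two-valued prime fuzzy ideal sitting below $P$. The only cosmetic difference is that the paper assigns the value $P(1)$ (rather than $0$) off $\mathfrak{p}$; both choices keep the resulting fuzzy ideal non-constant, prime, equivalent to $\chi_{\mathfrak{p}}$, and contained in $P$.
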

\begin{proof}
Let $Q$ be a prime fuzzy ideal over a ring $R$. Then $Q_*$ is prime, so it contains a minimal prime ideal $P$. Then we define
$$I(x)=\left\{
         \begin{array}{ll}
           Q(0), & \text{if $x\in P$} \\
           Q(1), & \text{otherwise}
         \end{array}
       \right. $$
$I$ is a prime fuzzy ideal contained in $Q$ and equivalent to $\chi_P$. Hence it is minimal.
\end{proof}

\begin{corollary}
Let $R$ be a noetherian ring. The number of equivalence classes of minimal prime fuzzy ideals is finite.
\end{corollary}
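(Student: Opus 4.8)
The plan is to reduce the statement to the classical fact that a noetherian ring possesses only finitely many minimal prime ideals. First I would recall that, by the definition adopted just above, a prime fuzzy ideal is \emph{minimal} precisely when it is equivalent (under the relation $I\sim J$ iff $I(x)>I(y)\Leftrightarrow J(x)>J(y)$ for all $x,y\in R$) to the characteristic map $\chi_{\mathfrak{p}}$ of some minimal prime ideal $\mathfrak{p}$ of $R$. Consequently the assignment $\mathfrak{p}\mapsto[\chi_{\mathfrak{p}}]$, sending a minimal prime ideal to the equivalence class of its characteristic map, is a surjection from the set of minimal prime ideals of $R$ onto the set of equivalence classes of minimal prime fuzzy ideals.

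Next I would invoke the well-known result that a noetherian ring $R$ has only finitely many minimal prime ideals (see, e.g., \cite{MCRobson}); indeed, since $R$ is noetherian, the zero ideal admits only finitely many primes minimal over it, and these are exactly the minimal primes of $R$. Since the domain of the surjection above is then finite, its image is finite as well, which is precisely the assertion of the corollary.

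If one wants a sharper statement, one checks that the map $\mathfrak{p}\mapsto[\chi_{\mathfrak{p}}]$ is in fact a bijection: if $\mathfrak{p}\neq\mathfrak{q}$ are distinct minimal primes then, by minimality, neither contains the other, so one may choose $a\in\mathfrak{p}\setminus\mathfrak{q}$ and $b\in\mathfrak{q}\setminus\mathfrak{p}$; then $\chi_{\mathfrak{p}}(a)=1>0=\chi_{\mathfrak{p}}(b)$ while $\chi_{\mathfrak{q}}(a)=0<1=\chi_{\mathfrak{q}}(b)$, so $\chi_{\mathfrak{p}}\not\sim\chi_{\mathfrak{q}}$. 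Hence the number of equivalence classes of minimal prime fuzzy ideals equals the (finite) number of minimal prime ideals of $R$.

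The only delicate point — the ``main obstacle'', such as it is — is ensuring that the finiteness of the set of minimal primes is being used in the noncommutative setting rather than in the commutative one; but this is a standard feature of noetherian rings and requires no commutativity, so once the translation through characteristic functions is in place the argument is immediate.
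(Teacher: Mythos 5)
Your proof is correct and follows essentially the same route as the paper, which simply observes that the statement ``follows from the same property for prime crisp ideals''; you have merely made explicit the surjection $\mathfrak{p}\mapsto[\chi_{\mathfrak{p}}]$ from the (finite) set of minimal primes of a noetherian ring onto the equivalence classes, which is exactly the intended reduction. The added injectivity check is a correct and harmless refinement not present in the paper.
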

\begin{proof}
It follows from the same property for prime crisp ideals.
\end{proof}

\section{Fuzzy semiprimeness and fuzzy prime radical}

Once primeness is achieved, following the usual way, semiprime fuzzy ideals may be defined as intersection of prime ones. Nevertheless, we follow a different, although equivalent, procedure. We use the Levitzki-Nagata Theorem characterizing semiprime ideals by an element-like property and define semiprime fuzzy ideals analogously to Definition \ref{defprime} for fuzzy prime ideals. Then we prove that semiprime fuzzy  ideals are those which can be decomposed as an intersection of prime fuzzy  ideals. This allows us to give a definition of the fuzzy prime radical which preserves the properties of a radical.

  Let us first recall the reader the scheme, similar to Figure \ref{figprimeness}, of different semiprimess notions for fuzzy ideals that appear in the literature.

\begin{definition} Let $P:R\rightarrow [0,1]$ be a non-constant fuzzy ideal over $R$. $P$ is said to be
\begin{enumerate}
\item D$0'$-semiprime if, whenever $\langle x_t\rangle^2\leq P$ for some fuzzy singleton $x_t$, then $x_t\leq P$.
\item D1-semiprime (D1L-semiprime, D1R-semiprime) if, whenever $I^2\leq P$ for some fuzzy (left, right) ideal $I$, then $I\leq P$.
\item D2-semiprime if $P_\alpha$ is semiprime for any $P(0)\geq \alpha >P(1)$.
\item D4-semiprime if $P(x^2)=P(x)$ for any $x\in R$.
\end{enumerate}
\end{definition}

Analogously to the comments of Section \ref{survey}, the reader may check that those notions are related each other as showed in Figure \ref{semiprimeness}, see \cite{DixitKumarAjmal_1992} and \cite{Zahedi_1992}. We recall that by $\complement$ we mean that the implication needs commutativity on the base ring $R$.

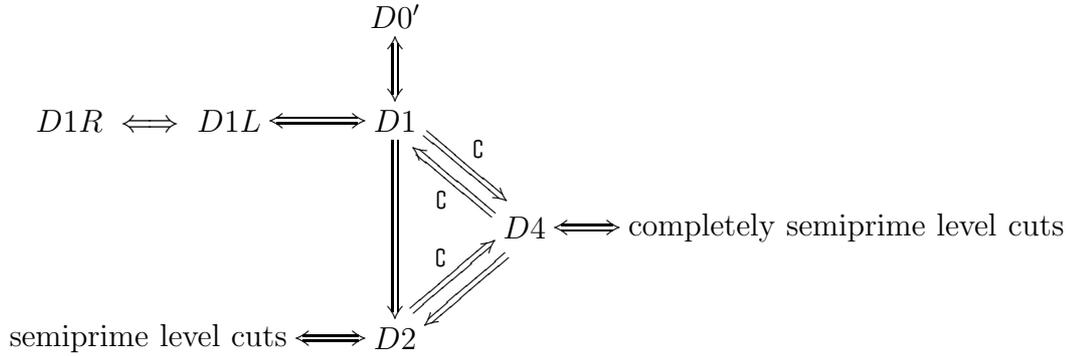
\begin{figure}[h]
$$\xymatrix{
 &   & D0' \ar@2{<->}[d]& &\\
 &D1R \iff D1L \ar@2{<->}[r]& D1 \ar@2{->}[dd] \ar@<0.7ex>@2{->}[dr]^-{\complement} & & \\
 &  & & D4 \ar@2{<->}[r]\ar@<0.7ex>@2{->}[dl]\ar@<0.7ex>@2{->}[ul]^-{\complement}& \text{completely semiprime level cuts} \\
&  \text{semiprime level cuts} \ar@2{<->}[r] &  D2\ar@<0.7ex>@2{->}[ur]^-{\complement} &
 &}$$
\caption{Notions of fuzzy semiprimeness} \label{semiprimeness}
    \end{figure}

By \cite{Levitzki_1951} and \cite{Nagata_1951}, a (crisp) ideal $P$ of $R$ is semiprime if and only if, whenever $xRx\subseteq P$ for some $x\in R$, then $x\in P$. We make use of this idea for giving the following notion of fuzzy semiprimeness.

\begin{definition} Let $R$ be an arbitrary ring with unity. A non-constant fuzzy ideal $P:R\rightarrow [0,1]$ is said to be \emph{semiprime} if $\mathrm{Inf} P(xRx)=P(x)$ for all $x\in R$.
\end{definition}

\begin{proposition}\label{charsemi}
Let $R$ be an arbitrary ring with unity and $P:R\rightarrow [0,1]$ be a non-constant fuzzy ideal of $R$. The following are equivalent:
\begin{enumerate}[$a)$]
\item $P$ is semiprime.
\item $P_\alpha$ is semiprime for all $P(0)\geq \alpha >P(1)$.
\item $R/P_\alpha$ is a semiprime ring for all $P(0)\geq \alpha >P(1)$.
\item For any fuzzy ideal $I$, if $I(xrx)\leq P(xrx)$ for all $r\in R$, then $I(x)\leq P(x)$.
\end{enumerate}
Moreover, if $R$ is commutative, any of these statements is equivalent to $P$ being D4-semiprime.
\end{proposition}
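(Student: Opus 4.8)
The plan is to follow closely the proof of Proposition \ref{charprime}, since semiprimeness over a noncommutative ring behaves with respect to the set $xRx$ exactly as primeness does with respect to $xRy$. The first observation I would record is that, because $R$ has a unit and $P$ is a two-sided fuzzy ideal, one always has $P(xrx)=P((xr)x)\geq P(x)$ for every $r\in R$, so that $\mathrm{Inf}\,P(xRx)\geq P(x)$ holds unconditionally; moreover $x^2\in xRx$. Thus condition $(a)$ amounts to the reverse inequality $\mathrm{Inf}\,P(xRx)\leq P(x)$, and this remark streamlines all the arguments below.

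For $(a)\Rightarrow(b)$: fix $P(0)\geq\alpha>P(1)$ and $x$ with $xRx\subseteq P_\alpha$; then $P(xrx)\geq\alpha$ for all $r$, whence $P(x)=\mathrm{Inf}\,P(xRx)\geq\alpha$ and $x\in P_\alpha$, so $P_\alpha$ is semiprime. For $(b)\Rightarrow(a)$: if $(a)$ fails, pick $x$ with $t:=\mathrm{Inf}\,P(xRx)>P(x)$; since $P(x)\geq P(1)$ and $t\leq P(0)$ we get $P(0)\geq t>P(1)$, and the cut $P_t$ contains $xRx$ but not $x$, contradicting semiprimeness of $P_t$. The equivalence $(b)\Leftrightarrow(c)$ is immediate from the definition of a semiprime ring applied to $R/P_\alpha$.

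For $(a)\Rightarrow(d)$: if $(d)$ fails there is a fuzzy ideal $I$ and an $x$ with $I(xrx)\leq P(xrx)$ for all $r$ but $I(x)>P(x)$; then $P(xrx)\geq I(xrx)\geq I(x)$ for all $r$, so $\mathrm{Inf}\,P(xRx)\geq I(x)>P(x)$, contradicting $(a)$. For $(d)\Rightarrow(a)$: if $\mathrm{Inf}\,P(xRx)>P(x)$, choose $t$ strictly between them and set $I(z)=P(z)$ when $P(z)\geq t$ and $I(z)=t$ otherwise; this $I$ is a fuzzy ideal (the verification is the same as in the proof of Proposition \ref{charprime}), it satisfies $I(xrx)=P(xrx)$ for all $r$, yet $I(x)=t>P(x)$, contradicting $(d)$. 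Finally, when $R$ is commutative, $P(xrx)=P(x^2r)\geq P(x^2)$ together with $x^2\in xRx$ gives $\mathrm{Inf}\,P(xRx)=P(x^2)$, so $(a)$ reads $P(x^2)=P(x)$, which is exactly D4-semiprimeness.

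I expect no serious obstacle: the statement is essentially a transcription of Proposition \ref{charprime} with $xRy$ replaced by $xRx$. The only points needing a little care are keeping track of the range $P(0)\geq\alpha>P(1)$ so that the relevant level cuts are honest ideals, and reusing — rather than re-proving — the fact that the truncated map $I$ built in $(d)\Rightarrow(a)$ is a fuzzy ideal.
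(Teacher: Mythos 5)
Your proposal is correct and follows exactly the route the paper intends: the paper's proof of Proposition \ref{charsemi} consists of the single remark that it is ``similar to the proof of Proposition \ref{charprime}'', and your argument is precisely that adaptation, with $xRy$ replaced by $xRx$, including the same truncated ideal $I(z)=P(z)\vee t$ in $d)\Rightarrow a)$ and the same reduction $\mathrm{Inf}\,P(xRx)=P(x^2)$ in the commutative case. The preliminary observation that $\mathrm{Inf}\,P(xRx)\geq P(x)$ always holds is a sensible streamlining and introduces no gap.
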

\begin{proof}
It is similar to the proof of Proposition \ref{charprime}.
\end{proof}

Then we may say that a ring is semiprime if and only if every zero-type ideal is semiprime.

In the following theorem we prove the desired characterization of semiprime ideals as  intersection of primes. For that purpose we first need a previous lemma. This is an application of
\cite[Lemma 4]{McCoy_1949}, nevertheless, in order to make the paper self-contained, we add the complete proof here.

\begin{lemma}\label{zorn} Let $R$ be a ring with unity, $P$ a semiprime ideal of $R$ and $x\in R$ such that $x\notin P$. Then there exists a prime ideal $M$ such that $P\subseteq M$ and $x\notin M$.
\end{lemma}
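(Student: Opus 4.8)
\emph{Proof proposal.} The plan is to run McCoy's classical construction of a prime ideal avoiding a prescribed $m$-system, adapted to the noncommutative situation; recall that a subset $S\subseteq R$ is an \emph{$m$-system} if for all $a,b\in S$ there is $r\in R$ with $arb\in S$. First I would use semiprimeness of $P$ to manufacture an $m$-system that is disjoint from $P$ and contains $x$. Put $x_0=x$; since $x_0\notin P$ and $P$ is semiprime, $x_0Rx_0\nsubseteq P$, so we can choose $x_1\in x_0Rx_0$ with $x_1\notin P$. Iterating, we build a sequence $(x_n)_{n\geq 0}$ with $x_{n+1}\in x_nRx_n\setminus P$ for every $n$, and set $S=\{x_n:n\geq0\}$. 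By construction $S\cap P=\emptyset$ and $x\in S$.

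Next I would verify that $S$ is an $m$-system. An easy induction, which is exactly where the existence of a unity in $R$ is used, shows that $x_j\in x_iR$ whenever $i\leq j$ (for $j=i$ this is $x_i=x_i\cdot1$, and for $j>i$ we have $x_{j+1}\in x_jRx_j\subseteq x_iRx_j\subseteq x_iR$ using the inductive hypothesis $x_j\in x_iR$). Now take $a=x_i$ and $b=x_j$ with, say, $i\leq j$, and write $x_j=x_iv$ and $x_{j+1}=x_jux_j$ with $u,v\in R$; then $x_{j+1}=x_i(vu)x_j$, so $a(vu)b=x_{j+1}\in S$, as required.

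Then I would apply Zorn's Lemma to the family $\mathcal{F}$ of all ideals of $R$ that contain $P$ and are disjoint from $S$, ordered by inclusion. This family is nonempty since $P\in\mathcal{F}$, and the union of a chain in $\mathcal{F}$ again lies in $\mathcal{F}$; let $M$ be a maximal element. Since $x\in S$ we get $x\notin M$, and in particular $M\neq R$, so $M$ is a proper ideal with $P\subseteq M$ and $x\notin M$. It only remains to check that $M$ is prime.

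Finally I would establish primeness of $M$ by contraposition in the ideal formulation. Suppose $A,B$ are ideals with $A\nsubseteq M$ and $B\nsubseteq M$. Then $M\subsetneq M+A$ and $M\subsetneq M+B$, so by maximality of $M$ both $M+A$ and $M+B$ meet $S$; choose $a\in(M+A)\cap S$ and $b\in(M+B)\cap S$, and pick $r\in R$ with $arb\in S$. Since $M$ is a two-sided ideal and $A$ a right ideal, $arb\in(M+A)R(M+B)\subseteq M+ARB\subseteq M+AB$; if we had $AB\subseteq M$ this would force $arb\in M\cap S=\emptyset$, a contradiction. Hence $AB\nsubseteq M$, so $M$ is prime, and the result follows. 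The only step that is not entirely routine bookkeeping is the verification that $S$ is an $m$-system, which genuinely relies on $R$ having a unity.
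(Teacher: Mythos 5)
Your proposal is correct and is essentially the paper's own argument: both run McCoy's construction, using semiprimeness to build the $m$-system $S=\{x_n\}_{n\geq 0}$ with $x_{n+1}\in x_nRx_n\setminus P$, and then take an ideal maximal among those disjoint from $S$. In fact your write-up is more careful at the final step than the printed proof: you correctly pass through $(M+A)R(M+B)\subseteq M+ARB\subseteq M+AB$, whereas the paper picks $x_n\in I$ and $x_m\in J$ (rather than in $M+I$ and $M+J$) and then invokes the inclusion $I\cap J\subseteq IJ$, which is false in general; the intended point there is that once $x_t\in I\cap J$ one has $x_{t+1}=x_tr_tx_t\in IJ\setminus M$. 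The one spot in your argument that needs a patch is the $m$-system verification: in a noncommutative ring the reduction ``take $a=x_i$ and $b=x_j$ with, say, $i\leq j$'' is not a genuine loss of generality, since for the pair taken in the other order you must produce $r$ with $x_jrx_i\in S$ when $j>i$ (and your application really does use arbitrary ordered pairs, as the element of $(M+A)\cap S$ may carry the larger index). This requires the companion fact $x_j\in Rx_i$ for $i\leq j$, which follows by the same one-line induction ($x_{j+1}=x_jr_jx_j\in Rx_j\subseteq Rx_i$), so the gap is purely cosmetic.
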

\begin{proof}
Since $x_0=x\notin P$ and $P$ is semiprime, there exists certain $r_0\in R$ such that $x_1=x_0r_0x_0\notin P$. Then, there exists some $r_1\in R$ such that $x_2=x_1r_1x_1\notin P$. Repeating the argument, we find a sequence of elements $X=\{x_i\}_{i\geq 0}$ in  $R$ with $P\cap X=\emptyset$. Observe that, by its construction, once an element of the sequence is in an ideal, all the next ones so is.

By Zorn's Lemma, there is an ideal $M$ maximal with respect to the property $X\cap M=\emptyset$. Clearly, $P\subseteq M$. We claim that $M$ is prime. Indeed, if $I$ and $J$ are ideals of $R$ such that $I\nsubseteq M$ and $J \nsubseteq M$, then, by the maximality of $M$, there exist some $n,m\geq 0$ such that $x_n\in I$ and $x_m\in J$. Hence, $x_t\in I\cap J$ for $t>\mathrm{max}\{n,m\}$. Now, $I\cap J\subseteq IJ$ so $IJ\nsubseteq M$
\end{proof}

\begin{theorem}\label{inter}
A fuzzy ideal is semiprime if and only if it is the intersection of prime fuzzy ideals.
\end{theorem}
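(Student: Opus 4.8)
The plan is to prove the two implications separately; ``intersection of primes $\Rightarrow$ semiprime'' is routine, while the converse carries the weight of the statement. (Throughout I assume, as the definition of semiprimeness forces, that the fuzzy ideals in question are non-constant.)

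For the routine direction I would argue as follows. Suppose $P=\bigwedge_{i\in\Lambda}P_i$ with each $P_i$ a prime fuzzy ideal. A pointwise infimum of fuzzy ideals is again a fuzzy ideal, since $\bigwedge_i(a_i\wedge b_i)=(\bigwedge_i a_i)\wedge(\bigwedge_i b_i)$ and $\bigwedge_i(a_i\vee b_i)\ge(\bigwedge_i a_i)\vee(\bigwedge_i b_i)$. Moreover every prime fuzzy ideal is semiprime, for putting $y=x$ in Definition \ref{defprime} gives $\mathrm{Inf}\,P_i(xRx)=P_i(x)\vee P_i(x)=P_i(x)$. Since iterated infima commute,
$$\mathrm{Inf}\,P(xRx)=\bigwedge_{r\in R}\bigwedge_{i\in\Lambda}P_i(xrx)=\bigwedge_{i\in\Lambda}\mathrm{Inf}\,P_i(xRx)=\bigwedge_{i\in\Lambda}P_i(x)=P(x),$$
which is exactly the condition for $P$ to be semiprime.

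For the converse, let $P$ be semiprime; I would set $\mathcal F=\{Q : Q\text{ a prime fuzzy ideal with }P\le Q\}$ and show $P=\bigwedge\mathcal F$. Since $P\le\bigwedge\mathcal F$ is clear, the task is to prove $\bigwedge_{Q\in\mathcal F}Q(x)\le P(x)$ for each $x\in R$ (this will also show $\mathcal F\ne\emptyset$). The members of $\mathcal F$ I would use are ``basic'' prime fuzzy ideals: given a crisp prime ideal $M\subsetneq R$ and a value $\mu$ with $P(1)<\mu<P(0)$, put $Q(z)=P(0)$ for $z\in M$ and $Q(z)=\mu$ for $z\notin M$; then $Q_\beta=M$ for every $\beta$ with $\mu=Q(1)<\beta\le Q(0)=P(0)$, so $Q$ is a fuzzy ideal whose level cuts in this range all equal the crisp prime $M$, hence $Q$ is prime by Proposition \ref{charprime}. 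Now fix $x\in R$. If $P(x)=P(0)$, pick any admissible $\mu$; by Proposition \ref{charsemi} the crisp ideal $P_\mu$ is semiprime and proper, so Lemma \ref{zorn} (applied to $1\notin P_\mu$) produces a crisp prime $M\supseteq P_\mu$; the associated basic $Q$ lies in $\mathcal F$ because $P(z)\le P(0)=Q(z)$ on $M$ while $z\notin M\supseteq P_\mu$ forces $P(z)<\mu=Q(z)$, and $Q(x)=P(0)$ since $x\in P_*\subseteq P_\mu\subseteq M$. If $P(x)<P(0)$, then for every $\mu$ in the non-empty open interval $(\max\{P(x),P(1)\},P(0))$ the ideal $P_\mu$ is semiprime and $x\notin P_\mu$, so Lemma \ref{zorn} yields a crisp prime $M\supseteq P_\mu$ with $x\notin M$; the associated basic $Q$ again lies in $\mathcal F$ by the same check and satisfies $Q(x)=\mu$. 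Letting $\mu$ decrease to $\max\{P(x),P(1)\}=P(x)$ gives $\bigwedge_{Q\in\mathcal F}Q(x)\le P(x)$, finishing the argument.

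The hard part is precisely why the converse cannot be a verbatim transcription of the crisp proof: one is tempted to work level by level, writing each semiprime cut $P_\alpha$ as an intersection of crisp primes and gluing these into a single fuzzy ideal, but this fails because a union $M\cup P_\beta$ of crisp ideals need not be an ideal. The resolution is to exploit that $\mathcal F$ may be a large family, so each member can be taken ``basic'' (two-valued over one crisp prime), and to choose the threshold $\mu$ strictly between $P(x)$ and $P(0)$ and strictly above $P(1)$: the first inequality lets the prime $M$ from Lemma \ref{zorn} avoid $x$, the bound $\mu>P(1)$ keeps $P_\mu$ in the range where Proposition \ref{charsemi} guarantees both semiprimeness and properness, and together these make $P\le Q$ hold automatically below level $\mu$.
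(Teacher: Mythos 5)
Your proof is correct and follows essentially the same route as the paper's: the easy direction by commuting infima, and the converse by invoking Lemma \ref{zorn} on the semiprime level cuts $P_\mu$ and building two-valued prime fuzzy ideals supported on the resulting crisp primes. The only differences are cosmetic (you take an infimum over a family of thresholds $\mu$ where the paper argues by contradiction with a single intermediate value $t$, and you handle the case $P(x)=P(0)$ via Lemma \ref{zorn} applied to $1$ rather than via a maximal ideal containing $P^*$).
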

\begin{proof}
Let $R$ be a ring with unity and $\{P_j\}_{j\in J}$ be a set of prime fuzzy  ideals over $R$. Then, for any $x\in R$,
$$\begin{array}{rl}
\Inf (\displaystyle\bigcap_{j\in J}P_j)(xRx)& =\displaystyle\bigwedge_{r\in R} (\displaystyle\bigcap_{j\in J}P_j)(xrx)\\
& = \displaystyle\bigwedge_{r\in R} \displaystyle\bigwedge_{j\in J} P_j(xrx) \\
& = \displaystyle\bigwedge_{j\in J}\displaystyle\bigwedge_{r\in R} P_j(xrx)\\
& = \displaystyle\bigwedge_{j\in J} \Inf P_j(xRx)\\
& \overset{\dagger}{=} \displaystyle\bigwedge_{j\in J} P_j(x) \\
& = (\displaystyle\bigcap_{j\in J}P_j)(x)
\end{array}$$
where $\dagger$ is due to the fact that $P_j$ is prime for any $j\in J$.

Conversely, let $P$ be a semiprime fuzzy ideal, we denote $$\mathcal{C}=\{\text{prime fuzzy ideals $Q$ such that $P\leq Q$}\}.$$ We claim that $\mathcal{C}\neq \emptyset$.
Since $P$ is non-constant, then $P(1)\neq P(0)$ and $P^*=\{x\in R \text{ such that } P(x)>P(1)\}$ is a proper ideal of $R$. Hence, by Zorn's Lemma, there exists a maximal ideal $M$ of $R$ containing $P^*$. We define the fuzzy ideal
$$H(x)=\left\{
         \begin{array}{ll}
           P(0), & \text{if $x\in M$} \\
           P(1), & \text{otherwise}
         \end{array}
       \right. $$
It is clear that $H$ is a prime fuzzy ideal, actually maximal in the sense of \cite[Definition 3.8]{Kumar_1992}, such that $P\leq H$.

We prove that $P=\bigcap_{Q\in \mathcal{C}}Q$. Obviously, $P\leq\bigcap_{Q\in \mathcal{C}}Q$ by definition of $\mathcal{C}$. Suppose that there exists $x\in R$ such that $P(x)< (\bigcap Q)(x)=\bigwedge Q(x)$. Let us consider two cases:

If $P(x)=P(0)$, we choose the prime fuzzy ideal $H$ defined above. Then $H(x)=P(0)=P(x)< (\bigcap Q)(x)$ and we get a contradiction.

If $P(x)<P(0)$, let $t\in (0,1)$ such that $P(x)<t< \bigwedge Q(x)$. We may suppose that $t<P(0)$. Now, $P_t$ is semiprime and $x\notin P_t$. By Lemma \ref{zorn}, there exists a prime ideal $M$ with $P_t\subseteq M$ and $x\notin M$. Then we define the fuzzy set
$$I(z)=\left\{
         \begin{array}{ll}
           P(0), & \text{if $z\in M$} \\
           t, & \text{otherwise}
         \end{array}
       \right. $$
As above, the only proper level cut is $M$, so $I$ is a prime fuzzy ideal of $R$. Furthermore, if $z\in M$, $P(z)\leq P(0)=I(z)$, and, if $z\notin M$, then $z\notin P_t$, so $P(z)<t=I(z)$. Hence $P\leq I$ and $I\in \mathcal{C}$. Although, once again, $I(x)=t< (\bigcap Q)(x)$ and we get a contradiction.
\end{proof}

In what follows we denote by $\mathrm{Rad}(I)$ the radical of an ideal $I$ of $R$, that is, the intersection of all prime ideals containing $I$.

\begin{corollary}\label{frad}
Let $R$ be a ring with unity and $I$ be a non-constant fuzzy ideal over $R$. The following fuzzy ideals coincide:
\begin{enumerate}[i)]
\item The intersection $F_1$ of all semiprime fuzzy ideals containing $I$.
\item The intersection $F_2$ of all prime fuzzy ideals containing $I$.
\item The fuzzy ideal $F_3$ given by $F_3(x)=\mathrm{Sup}\{t\in [0,1] \text{ such that } x\in \mathrm{Rad}(I_t)\}$.
\end{enumerate}
\end{corollary}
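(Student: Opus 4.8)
The plan is to put $F_3$ at the centre of the argument: I would first show that $F_3$ is itself a semiprime fuzzy ideal containing $I$, and then that $F_3$ is contained in every semiprime (hence in every prime) fuzzy ideal containing $I$. These two facts force $F_1=F_3$, after which $F_2$ is trapped between $F_1$ and $F_3$ by means of Theorem \ref{inter}. Throughout, the supremum defining $F_3$ is read as ranging over $t\le I(0)$, i.e.\ over those $t$ for which $I_t$ is an ideal of $R$.

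The first step is a computation of the level cuts of $F_3$. Since $I_t\subseteq I_{t'}$ whenever $t'\le t$, and the radical of ideals is monotone, the set $S_x=\{t\le I(0):x\in\mathrm{Rad}(I_t)\}$ is a down-set of $[0,I(0)]$ that contains $[0,I(1)]$ (because $I_t=R$ there). A short argument with suprema then yields that $F_3(x)\ge\alpha$ exactly when $x\in\mathrm{Rad}(I_t)$ for every $t<\alpha$; that is,
$$(F_3)_\alpha=\bigcap_{t<\alpha}\mathrm{Rad}(I_t)\qquad(I(1)<\alpha\le I(0)).$$
Each $\mathrm{Rad}(I_t)$ is a semiprime ideal, being an intersection of prime ideals, and an intersection of semiprime ideals is again semiprime (immediate from the Levitzki-Nagata characterization of semiprimeness); hence every such cut $(F_3)_\alpha$ is a semiprime ideal. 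As $F_3(0)=I(0)$ and $F_3(1)=I(1)$ are immediate, these cuts cover the whole relevant range, so $F_3$ is a non-constant fuzzy ideal, and Proposition \ref{charsemi} then tells us $F_3$ is semiprime. Finally $I\le F_3$ holds because $x\in I_{I(x)}\subseteq\mathrm{Rad}(I_{I(x)})$ for every $x$.

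Next I would take an arbitrary semiprime fuzzy ideal $Q$ with $I\le Q$ and show $F_3\le Q$. Fix $x$ and $t\le I(0)$ with $x\in\mathrm{Rad}(I_t)$; it is enough to check $Q(x)\ge t$. If $t\le Q(1)$ this is clear. Otherwise $Q(1)<t\le I(0)\le Q(0)$, so $Q_t$ is a semiprime ideal by Proposition \ref{charsemi}; since $I\le Q$ forces $I_t\subseteq Q_t$, monotonicity of $\mathrm{Rad}$ gives $x\in\mathrm{Rad}(I_t)\subseteq\mathrm{Rad}(Q_t)=Q_t$, i.e.\ $Q(x)\ge t$. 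Taking the supremum over such $t$ yields $F_3(x)\le Q(x)$. Now I assemble the pieces. Because $F_3$ is a semiprime fuzzy ideal containing $I$, it belongs to the family defining $F_1$, so $F_1\le F_3$; the line just proved gives $F_3\le Q$ for every member $Q$ of that family, so $F_3\le F_1$, whence $F_1=F_3$. For $F_2$: every prime fuzzy ideal is semiprime — put $y=x$ in Definition \ref{defprime} to get $\mathrm{Inf}\,P(xRx)=P(x)\vee P(x)=P(x)$ — so the family defining $F_2$ sits inside the one defining $F_1$ and $F_1\le F_2$; conversely, by Theorem \ref{inter} the semiprime ideal $F_3$ can be written as $\bigcap_jP_j$ with each $P_j$ a prime fuzzy ideal, necessarily containing $F_3\ge I$, so $F_2\le P_j$ for all $j$ and $F_2\le\bigcap_jP_j=F_3$. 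Therefore $F_1=F_2=F_3$.

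The only point that needs genuine care — the main obstacle — is the level-cut identity $(F_3)_\alpha=\bigcap_{t<\alpha}\mathrm{Rad}(I_t)$: one must keep the supremum in the definition of $F_3$ restricted to $t\le I(0)$, so that $\mathrm{Rad}(I_t)$ is an honest ideal, and one must work with the strict inequality $t<\alpha$ rather than $t\le\alpha$, since the supremum defining $F_3(x)$ need not be attained. Everything else — monotonicity of $\mathrm{Rad}$, the interplay between an ideal and its radical, and the dictionary between a fuzzy ideal and its level cuts via Propositions \ref{charprime} and \ref{charsemi} — is routine bookkeeping.
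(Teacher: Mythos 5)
Your proof is correct, but it is organized genuinely differently from the paper's. The paper proves the cyclic chain $F_1\le F_2\le F_3\le F_1$: the first inequality from the observation that every prime fuzzy ideal is semiprime; the second by assuming $F_3(x)<s<F_2(x)$ and using Lemma \ref{zorn} to build an explicit two-valued prime fuzzy ideal $P\ge I$ with $P(x)=s$; the third by assuming $F_1(x)<s<F_3(x)$ and deriving a contradiction from the element-wise identity $\mathrm{Inf}\,Q(xRx)=Q(x)$ for a semiprime $Q$ with $Q(x)<s$. You instead put $F_3$ at the centre: via the level-cut identity $(F_3)_\alpha=\bigcap_{t<\alpha}\mathrm{Rad}(I_t)$ and Proposition \ref{charsemi} you show that $F_3$ is itself a semiprime fuzzy ideal containing $I$ and that it lies below every such ideal (using $\mathrm{Rad}(Q_t)=Q_t$ for a semiprime crisp ideal $Q_t$, which is the same underlying input as Lemma \ref{zorn}); this identifies $F_1=F_3$ as an \emph{attained} minimum, a slightly stronger structural fact than the paper records, and in particular it verifies that $F_3$ really is a (non-constant, semiprime) fuzzy ideal, which the paper's proof leaves implicit. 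You then trap $F_2$ by invoking Theorem \ref{inter} to decompose $F_3$ into primes, where the paper constructs its prime fuzzy ideal by hand inside the corollary's proof. Both arguments ultimately rest on the same two ingredients (prime implies semiprime, and McCoy's lemma in the guise of Lemma \ref{zorn}); yours buys the extra information that the fuzzy prime radical is itself semiprime and that the infimum defining $F_1$ is achieved, at the price of the level-cut bookkeeping you rightly flag (keeping $t\le I(0)$ and using strict inequality $t<\alpha$), while the paper's version is more local and avoids that verification.
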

\begin{proof}
Let $\mathcal{C}$ be the set of all prime fuzzy ideals containing $I$, and $\mathcal{D}$ the set of all semiprime fuzzy ideals containing $I$.
Since every semiprime fuzzy ideal is prime, $\mathcal{C}\subseteq \mathcal{D}$ and then $F_1\leq F_2$. Let $x\in R$. Suppose that $F_3(x)< F_2(x)$ and there exists an $s\in [0,1]$ such that $F_3(x)< s<F_2(x)$. Then $x\notin \mathrm{Rad}(I_s)$, so, by Lemma \ref{zorn}, there exists a prime ideal $M$ such that $x\notin M$. We define the ideal $$P(z)=\left\{
         \begin{array}{ll}
           I(0), & \text{if $x\in M$} \\
           s, & \text{otherwise}
         \end{array}
       \right. $$
$P$ is a prime ideal since its unique proper level cut is $M$. For any $z\in M$, $I(z)\leq I(0)=P(z)$. For any $z\notin M$, $z\notin \mathrm{Rad}(I_s)$ and then $z\notin I_s$, so $I(z)<s=P(z)$. Therefore $I\leq P$ and $P\in \mathcal{C}$, but $P(x)=s<F_2(x)$ and we get a contradiction. Thus $F_2(x)\leq F_3(x)$.

Let us suppose that $F_1(x)<s<F_3(x)$ for some $s\in [0,1]$. Then $x\in \mathrm{Rad} (I_s)$ and, consequently, $xRx\subseteq \mathrm{Rad}(I_s)$. Also, there exists a semiprime fuzzy ideal $Q$ such that $Q(x)=\mathrm{Inf} \hspace{0.1cm} Q(xRx)<s$. Hence, there exists some $r\in R$ such that $Q(xrx)<s$ so $xrx\notin Q_s$. But $I\leq Q$ and $I_s\subseteq Q_s$, so $xRx\subseteq \mathrm{Rad}(I_s)\subseteq \mathrm{Rad}(Q_s)=Q_s$ and we get a contradiction. Hence $F_3(x)\leq F_1(x)$.
\end{proof}

For any non-constant fuzzy ideal $I$, we define de fuzzy prime radical of $I$, $\mathrm{FRad}(I)$, as any of the fuzzy ideals described in Corollary \ref{frad}. Hence, the following result is immediate.

\begin{corollary} $P$ is a  semiprime fuzzy ideal if and only if $\mathrm{FRad}(P)=P$.
\end{corollary}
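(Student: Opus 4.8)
The plan is to read the result off directly from the three descriptions of $\mathrm{FRad}(P)$ recorded in Corollary \ref{frad}, using the description $F_1$ (intersection of the semiprime fuzzy ideals above $P$) for one implication and $F_2$ (intersection of the prime fuzzy ideals above $P$) for the other. No genuinely new argument is needed; the corollary is essentially bookkeeping on top of Corollary \ref{frad} and Theorem \ref{inter}.

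First I would prove the ``only if'' part. Assume $P$ is a semiprime fuzzy ideal. Then $P$ itself lies in the family $\mathcal{D}$ of semiprime fuzzy ideals containing $P$, so the intersection $F_1=\bigcap_{Q\in\mathcal{D}}Q$ satisfies $F_1\leq P$; conversely $P\leq Q$ for every $Q\in\mathcal{D}$ by definition of $\mathcal{D}$, hence $P\leq F_1$. Therefore $\mathrm{FRad}(P)=F_1=P$. (Note $P$ is non-constant by hypothesis, so $\mathrm{FRad}(P)$ is defined, and $\mathcal{D}\neq\emptyset$ since $P\in\mathcal{D}$.)

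For the ``if'' part, assume $\mathrm{FRad}(P)=P$. Reading $\mathrm{FRad}(P)$ as $F_2$, this says $P$ equals the intersection of all prime fuzzy ideals containing it; in particular $P$ is an intersection of prime fuzzy ideals, the relevant family being nonempty as was shown in the proof of Theorem \ref{inter}. By Theorem \ref{inter}, any intersection of prime fuzzy ideals is semiprime, so $P$ is semiprime.

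I do not expect any real obstacle here. The only point worth a word of care is the standing non-constancy assumption, which is needed both so that $\mathrm{FRad}(P)$ is defined at all and so that the families of fuzzy ideals over which one intersects are nonempty; both requirements are automatic under the stated hypotheses.
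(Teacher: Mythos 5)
Your argument is correct and is exactly what the paper has in mind: the paper gives no explicit proof, declaring the corollary ``immediate'' from Corollary \ref{frad} and Theorem \ref{inter}, which is precisely the bookkeeping you carry out (using the $F_1$ description for the forward direction and the $F_2$ description plus Theorem \ref{inter} for the converse). Your added remarks about non-constancy and non-emptiness of the intersected families are sensible and consistent with the paper's conventions.
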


\begin{remark} We may then define the radical of a ring as the fuzzy radical of the zero ideal. This coincide with the crisp notion of radical, since the intersection of all prime  fuzzy ideals containing the zero ideal are precisely the intersection of the characteristic maps of all minimal prime ideals of the ring. Unfortunately, this does not coincide with the radical of any other zero-type fuzzy ideal.
We only may say that these are equivalent.
\end{remark}

\begin{lemma}
For any non-constant fuzzy ideal $I$, $\mathrm{FRad}(I)(0)=I(0)$ and $\mathrm{FRad}(I)(1)=I(1)$.
\end{lemma}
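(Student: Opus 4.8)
The plan is to work with the explicit description $\mathrm{FRad}(I)=F_3$ from Corollary~\ref{frad}, where $F_3(x)=\mathrm{Sup}\{t\in[0,1]\text{ such that }x\in\mathrm{Rad}(I_t)\}$, and to evaluate this supremum directly at $x=0$ and $x=1$ by analysing the level cuts $I_t$ at the two ends of their range. Recall that, since $I(1)$ is the minimum and $I(0)$ the maximum of the image of $I$, we have $I_t=R$ for $t\le I(1)$, the cut $I_t$ is a proper ideal of $R$ for $I(1)<t\le I(0)$, and $I_t=\emptyset$ (so not an ideal at all) for $t>I(0)$; in particular the set defining $F_3(x)$ is contained in $[0,I(0)]$.

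First I would treat $x=0$. For every $t\le I(0)$ one has $0\in I_t\subseteq\mathrm{Rad}(I_t)$, so $\{t:0\in\mathrm{Rad}(I_t)\}=[0,I(0)]$ and hence $\mathrm{FRad}(I)(0)=F_3(0)=I(0)$. Next I would treat $x=1$. For $t\le I(1)$ we have $I_t=R$, so trivially $1\in\mathrm{Rad}(I_t)=R$; for $I(1)<t\le I(0)$ the element $1$ does not lie in $I_t$, so $I_t$ is a proper ideal of the ring $R$ with unity and is therefore contained in some maximal (hence prime) ideal $M$, which gives $\mathrm{Rad}(I_t)\subseteq M\subsetneq R$ and thus $1\notin\mathrm{Rad}(I_t)$; values $t>I(0)$ are excluded as above. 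Hence $\{t:1\in\mathrm{Rad}(I_t)\}=[0,I(1)]$ and $\mathrm{FRad}(I)(1)=F_3(1)=I(1)$.

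Alternatively, one can argue through the description $\mathrm{FRad}(I)=F_2$ as the intersection of all prime fuzzy ideals containing $I$: any such $Q$ satisfies $Q\ge I$, whence $Q(0)\ge I(0)$ and $Q(1)\ge I(1)$, giving $F_2(0)\ge I(0)$ and $F_2(1)\ge I(1)$; for the reverse inequalities one uses the maximal prime fuzzy ideal $H$ built in the proof of Theorem~\ref{inter} (namely $H(x)=I(0)$ for $x$ in a fixed maximal ideal $M\supseteq I^*$ and $H(x)=I(1)$ otherwise), which lies in that intersection and satisfies $H(0)=I(0)$ and $H(1)=I(1)$ since $0\in M$ and $1\notin M$. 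The computation is essentially routine; the only point that deserves a moment's attention is the boundary behaviour of the cuts $I_t$, specifically the facts that a proper ideal of a ring with unity has proper radical (so that $1$ is excluded) and that cuts $I_t$ with $t>I(0)$ contribute nothing.
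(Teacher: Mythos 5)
Your proposal is correct and follows essentially the same route as the paper: the paper likewise computes $\mathrm{FRad}(I)(0)$ via the prime fuzzy ideal supported on a maximal ideal $M\supseteq I^*$ (your ``alternative'' argument) and computes $\mathrm{FRad}(I)(1)$ via the $F_3$ description, noting that $1\in\mathrm{Rad}(I_t)$ forces $I_t=R$, i.e.\ $t\leq I(1)$. Your direct $F_3$ computation at $x=0$ is a mild variant of the same idea and is, if anything, slightly more careful about the boundary cases of the cuts.
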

\begin{proof}
Since $I\leq \mathrm{FRad}(I)$, then $I(0)\leq\mathrm{FRad}(I)(0)$ and $I(1)\leq\mathrm{FRad}(I)(1)$. Now, let $M$ be a maximal ideal of $R$ containing $I^*$, hence we define the prime fuzzy ideal $P$
 $$P(x)=\left\{
         \begin{array}{ll}
           I(0), & \text{if $x\in M$} \\
           I(1), & \text{otherwise}
         \end{array}
       \right. $$
Hence $P$ is prime and $I\leq P$, so $\mathrm{FRad}(I)(0)\leq P(0)=I(0)$.

Finally, $\mathrm{FRad}(I)(1)=\mathrm{Sup}\{t | 1\in \mathrm{Rad}(I_t)\}$. But $1\in \mathrm{Rad}(I_t)$ if and only if $\mathrm{Rad}(I_t)=R$ if and only if $I_t=R$ if and only if $t=I(1)$. Hence $\mathrm{FRad}(I)(1)=I(1)$.
\end{proof}

\begin{proposition}
Let $I$ be a non-constant fuzzy ideal over a ring $R$ with unity. Then $\mathrm{Rad}(I_t)\subseteq(\mathrm{FRad}(I))_t$ for all $I(0)\geq t > I(1)$. If $I$ satisfies the sup property, then the equality holds.
\end{proposition}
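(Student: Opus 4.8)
The plan is to derive the general inclusion straight from the description $F_3$ of $\mathrm{FRad}(I)$ in Corollary \ref{frad}, and the reverse inclusion (under the sup property) from the description $F_2$ together with the kind of two-valued prime fuzzy ideal used repeatedly in Section \ref{fuzzyprime}. For $\mathrm{Rad}(I_t)\subseteq(\mathrm{FRad}(I))_t$ I would fix $t$ with $I(0)\geq t>I(1)$, so that $I_t$ is a proper ideal, and take $x\in\mathrm{Rad}(I_t)$; since $\mathrm{FRad}(I)(x)=\mathrm{Sup}\{s\in[0,1]:x\in\mathrm{Rad}(I_s)\}$ and $t$ belongs to that set, one gets $\mathrm{FRad}(I)(x)\geq t$, i.e. $x\in(\mathrm{FRad}(I))_t$. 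This step uses nothing beyond the definition of $F_3$.

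For the reverse inclusion I would assume $I$ has the sup property, take $x\in(\mathrm{FRad}(I))_t$, and argue by contradiction: if $x\notin\mathrm{Rad}(I_t)$ there is a prime ideal $P$ with $I_t\subseteq P$ and $x\notin P$. Since $P\neq R$, the set $\{I(z):z\notin P\}$ is nonempty, so the sup property yields some $z_0\notin P$ with $I(z_0)=\bigvee_{z\notin P}I(z)$; because $z_0\notin I_t$ this forces $I(z_0)<t\leq I(0)$. I would then let $Q\colon R\to[0,1]$ be $Q(z)=I(0)$ for $z\in P$ and $Q(z)=I(z_0)$ otherwise. Its unique proper level cut is the prime ideal $P$, so $Q$ is a non-constant prime fuzzy ideal by Proposition \ref{charprime}, and $I\leq Q$ since $I(z)\leq I(0)$ on $P$ and $I(z)\leq I(z_0)$ off $P$. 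As $\mathrm{FRad}(I)$ is, by Corollary \ref{frad}, the intersection of all prime fuzzy ideals containing $I$, this gives $\mathrm{FRad}(I)(x)\leq Q(x)=I(z_0)<t$, contradicting $x\in(\mathrm{FRad}(I))_t$; hence $x\in\mathrm{Rad}(I_t)$, and with the first part the equality follows.

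The main obstacle is precisely the point where the sup property is invoked. Without it one only knows $I(z)<t$ for every $z\notin P$, so $\bigvee_{z\notin P}I(z)$ may equal $t$ without being attained; then the value of $Q$ off $P$ cannot be chosen strictly below $t$ and the separating prime fuzzy ideal cannot be built --- which is exactly why the inclusion is genuinely strict for fuzzy ideals not satisfying the sup property, reflecting the contrast between the pointwise computation of $\mathrm{FRad}$ and the honest intersection of crisp primes defining $\mathrm{Rad}(I_t)$. Everything else is bookkeeping: checking that $Q$ is a fuzzy ideal and is prime is immediate from its list of level cuts, and the first inclusion is definitional.
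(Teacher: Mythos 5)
Your argument is correct, and it is worth noting that the paper itself gives no proof here: it simply defers to \cite[Lemma 3.8]{MalikMordeson_1992}. Your proposal supplies a self-contained argument in exactly the style the paper uses elsewhere (Theorem \ref{inter}, Corollary \ref{frad}): the easy inclusion falls out of the $F_3$ description of $\mathrm{FRad}(I)$, and the reverse inclusion under the sup property is obtained by separating $x$ from $\mathrm{Rad}(I_t)$ with a crisp prime $P$ and then promoting $P$ to the two-valued prime fuzzy ideal taking the value $I(0)$ on $P$ and $\bigvee_{z\notin P}I(z)$ off $P$; the sup property is used precisely to guarantee this supremum is attained at some $z_0\notin P$ with $I(z_0)<t$, so that the resulting $Q$ lies in $\mathcal{C}$ and forces $\mathrm{FRad}(I)(x)\le I(z_0)<t$ via $F_2$. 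All the verifications (that $Q$ is a non-constant fuzzy ideal, prime by Proposition \ref{charprime} since its only proper level cut is $P$, and that $I\le Q$) check out. One small caveat on your closing commentary: without the sup property the construction fails, which shows the equality \emph{may} fail, but it does not show the inclusion is strict for \emph{every} ideal lacking the sup property; that would require an explicit counterexample and is not claimed by the proposition in any case.
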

\begin{proof}
It is follows from \cite[Lemma 3.8]{MalikMordeson_1992}.
\end{proof}

\begin{proposition}
Let $P$ and $Q$ be non-constant fuzzy ideals over $R$. The following statements hold:
\begin{enumerate}[i)]
\item $\mathrm{FRad}(\mathrm{FRad}(P))=\mathrm{FRad}(P)$.
\item $\mathrm{Rad}(R/\mathrm{FRad}(R))=0$.
\item If $P\leq Q$ then $\mathrm{FRad}(P)\leq \mathrm{FRad}(Q)$.
\item $\mathrm{FRad}(P\cap Q)= \mathrm{FRad}(P)\cap \mathrm{FRad}(Q)$
\end{enumerate}
\end{proposition}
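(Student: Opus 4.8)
The plan is to use the three descriptions $F_1,F_2,F_3$ of $\mathrm{FRad}$ collected in Corollary~\ref{frad} together with Theorem~\ref{inter}, picking for each item the description that makes the statement transparent. For i), observe that $\mathrm{FRad}(P)=F_2$ is an intersection of prime fuzzy ideals, hence semiprime by Theorem~\ref{inter}; it is non-constant because, by the lemma above, $\mathrm{FRad}(P)(0)=P(0)>P(1)=\mathrm{FRad}(P)(1)$. Applying to $\mathrm{FRad}(P)$ the corollary stating that a fuzzy ideal is semiprime if and only if it equals its own fuzzy radical then gives $\mathrm{FRad}(\mathrm{FRad}(P))=\mathrm{FRad}(P)$. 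For ii), $\mathrm{FRad}(R)$ is again semiprime by Theorem~\ref{inter}, so by Proposition~\ref{charsemi} every level cut, in particular $(\mathrm{FRad}(R))_*$, is a semiprime ideal of $R$; since $R/\mathrm{FRad}(R)=R/(\mathrm{FRad}(R))_*$ is then a semiprime ring, its prime radical vanishes, i.e. $\mathrm{Rad}(R/\mathrm{FRad}(R))=0$.

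For iii), I would use the description $F_2$: if $P\leq Q$ then every prime fuzzy ideal containing $Q$ contains $P$, so the family whose intersection defines $\mathrm{FRad}(Q)$ sits inside the family whose intersection defines $\mathrm{FRad}(P)$, whence $\mathrm{FRad}(P)\leq\mathrm{FRad}(Q)$. The inclusion $\mathrm{FRad}(P\cap Q)\leq\mathrm{FRad}(P)\cap\mathrm{FRad}(Q)$ in iv) is then immediate from iii), since $P\cap Q\leq P$ and $P\cap Q\leq Q$.

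The reverse inclusion in iv) is the only point requiring genuine work, and I expect it to be the main obstacle. Here I would pass to the description $F_3$ and first establish the crisp identity $\mathrm{Rad}(A\cap B)=\mathrm{Rad}(A)\cap\mathrm{Rad}(B)$ for two-sided ideals $A,B$ of $R$; this holds in the noncommutative (Krull) setting because a prime ideal contains $A\cap B$ exactly when it contains $AB$, hence exactly when it contains $A$ or $B$, so the primes over $A\cap B$ are those over $A$ together with those over $B$. Since $(P\cap Q)_t=P_t\cap Q_t$, this yields $\mathrm{Rad}((P\cap Q)_t)=\mathrm{Rad}(P_t)\cap\mathrm{Rad}(Q_t)$, and therefore $\mathrm{FRad}(P\cap Q)(x)=\mathrm{Sup}\{t\in[0,1]:x\in\mathrm{Rad}(P_t)\text{ and }x\in\mathrm{Rad}(Q_t)\}$. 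Finally I would note that each of the sets $\{t:x\in\mathrm{Rad}(P_t)\}$ and $\{t:x\in\mathrm{Rad}(Q_t)\}$ is downward closed in $[0,1]$ (lowering the threshold enlarges the cut and hence its radical), so each is an interval of the form $[0,a)$ or $[0,a]$; consequently the supremum of their intersection equals the minimum of the two suprema, which is precisely $\mathrm{FRad}(P)(x)\wedge\mathrm{FRad}(Q)(x)$. The care needed is thus in verifying the radical identity noncommutatively and in checking that passing to suprema at the interval endpoint loses nothing.
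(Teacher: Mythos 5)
Your proof is correct and follows the route the paper intends (the paper's own proof is just the one-liner ``straightforward from the above statements''): i)--iii) fall out of Corollary \ref{frad}, Theorem \ref{inter} and the corollary characterizing semiprime ideals as fixed points of $\mathrm{FRad}$, exactly as you argue. The only genuinely nontrivial point is the reverse inclusion in iv), and your treatment via $F_3$ is sound: the crisp identity $\mathrm{Rad}(A\cap B)=\mathrm{Rad}(A)\cap\mathrm{Rad}(B)$ does hold for Krull-prime ideals of a noncommutative ring by the $AB\subseteq A\cap B$ argument you give, $(P\cap Q)_t=P_t\cap Q_t$, and the sets $\{t: x\in\mathrm{Rad}(P_t)\}$ are indeed downward closed and nonempty (they contain $t=0$), so the supremum of their intersection is the minimum of the suprema.
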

\begin{proof}
It is straightforward from the above statements.
\end{proof}

\section{Conclusions}

 The study of properties of fuzzy sets, where the base crisp set is a commutative ring, has attracted the attention of many researchers over many years. Nevertheless, many sets
are naturally endowed with two compatible operations forming a noncommutative ring. In this setting, it is easy to find examples showing that the properties proposed for commutative rings are no longer valid. Then it seems that it is necessary to study fuzzy sets over arbitrary rings.

 From this point of view, prime ideals, as structural pieces of a ring, should be the first concept under review in order to establish a well-founded fuzzy ring theory for noncommutative rings. So, in this paper, we propose a new definition of primeness for fuzzy ideals. This definition satisfies a property that seems necessary for a correct ``fuzzification'' of primeness: coherency with the level cuts. In addition, it generalizes the notion defined in \cite{KumbhojkarBapat_1990} for commutative rings, also coherent with levels cuts, albeit only when working under commutativity conditions.  Hence, fuzzy semiprime ideals are presented as an intersection of prime fuzzy ideals allowing us to define the fuzzy prime radical.

\end{document}